\newtheorem{thm}{Theorem}[section]
\newtheorem{rem}{Remark}[section]
\newtheorem{mydef}{Definition}[section]
\newtheorem{lem}{Lemma}[section]
\newtheorem{prop}{Proposition}[section]
\numberwithin{equation}{section}
\def\HH{ \EuFrak H}
\def \E{{\rm I\kern-0.16em E}}
\def\P{{\rm I\kern-0.16em P}}
\def\F{{\rm I\kern-0.16em F}}
\def\B{{\rm I\kern-0.16em B}}
\def\C{{\rm I\kern-0.46em C}}
\def\G{{\rm I\kern-0.50em G}}
\def\D{{\rm I\kern-0.50em D}}
\newcommand{\R}{\mathbb{R}}
\newcommand{\N}{\mathbb{N}}
\font\eka=cmex10
\newcommand{\rg}[1]{{\color{black} #1}}
\def\ind{\mathrel{\hbox{\rlap{%
\hbox to 7.5pt{\hrulefill}}\raise6.6pt\hbox{\eka\char'167}}}}
\begin{document}
\title{ \bf An asymptotic approach to proving sufficiency of Stein characterisations}

\author{Ehsan Azmoodeh 
	\thanks{Department of Mathematical Sciences,
		University of Liverpool, Liverpool L69 7ZL, United Kingdom.
		E-mail: \texttt{ehsan.azmoodeh@liverpool.ac.uk}}, \quad 
	Dario Gasbarra
	\thanks{University of Helsinki, Department of Mathematics and Statistics, B 314, FI-00014, Finland. E-mail: \texttt{dario.gasbarra@helsinki.fi}} \quad Robert E$.$ Gaunt
	 \thanks{The University of Manchester, Department of Mathematics, Alan Turing Building 2.217, M13 9PL Manchester, United Kingdom. E-mail:\texttt{ robert.gaunt@manchester.ac.uk}}
}


\date{ }                     

\setcounter{Maxaffil}{0}
\renewcommand\Affilfont{\itshape\small}

\maketitle 

\vspace{-10mm}

\abstract In extending Stein's method to new target distributions, the first step is to find a Stein operator that suitably characterises the target distribution. In this paper, we introduce a widely applicable technique for proving sufficiency of these Stein characterisations, which can be applied when the Stein operators are linear differential operators with polynomial coefficients. The approach involves performing an asymptotic analysis to prove that only one characteristic function satisfies a certain differential equation associated to the Stein characterisation. We use this approach to prove that all Stein operators with linear coefficients characterise their target distribution, and verify on a case-by-case basis that all polynomial Stein operators in the literature with coefficients of degree at most two are characterising. For $X$ denoting a standard Gaussian random variable and $H_p$ the $p$-th Hermite polynomial, we also prove, amongst other examples, that the Stein operators for $H_p(X)$, $p=3,4,\ldots,8$, with coefficients of minimal possible degree characterise their target distribution, and that the Stein operators for the products of $p=3,4,\ldots,8$ independent standard Gaussian random variables are characterising  (in both settings the Stein operators for the cases $p=1,2$ are already known to be characterising).  We leverage our Stein characterisations of $H_3(X)$ and $H_4(X)$ to derive characterisations of these target distributions in terms of iterated Gamma operators from Malliavin calculus, that are natural in the context of the Malliavin-Stein method.


 \vskip0.3cm
\noindent {\bf Keywords}: Stein's method; Stein characterisation; characteristic function; ordinary differential equation; asymptotic analysis; Gaussian polynomial \\
\noindent \textbf{MSC 2010}: Primary  	34A12; 34E05; 60E10; 62E10 Secondary 60F05; 60H07

\section{Introduction}

Stein's method is a powerful technique for bounding the distance between two probability distributions with respect to a given probability metric. It was originally developed for Gaussian approximation by Charles Stein in 1972 \cite{stein}. Since then, Stein's method has been adapted to many other distributions, such as the Poisson \cite{chen0}, beta \cite{dobler beta, goldstein4}, exponential \cite{chatterjee,pekoz1}, gamma \cite{dp18,luk,gpr17}, Laplace \cite{pike} and variance-gamma \cite{gaunt vg}. Stein's method has found applications throughout the mathematical sciences in areas as diverse as random graph theory \cite{bhj92}, queuing theory \cite{bd16} and number theory \cite{h09}. We refer the reader to the monographs \cite{bhj92,chen,n-p-book} and surveys \cite{a21,ross} for detailed accounts of Stein's method and its applications.

In extending Stein's method to a new target distribution $\mu$, the first step is to find a suitable operator $\mathcal{S}$ acting on a class of functions $\mathcal{F}$ that characterises the distribution $\mu$ in the sense that $W\sim \mu$ if and only if $\E[\mathcal{S}f(W)]=0$ for all functions $f\in\mathcal{F}$. This is known as a \emph{Stein characterisation} of $\mu$ and $\mathcal{S}$ is called a \emph{Stein operator} for $\mu$.
For continuous distributions (the subject of this paper), the operator $\mathcal{S}$ is often a linear differential operator with polynomial coefficients, a \emph{polynomial Stein operator}. For the standard Gaussian distribution $N(0,1)$, the classical Stein operator is $\mathcal{S}=\partial-y$, which acts on the class of all absolutely continuous functions $f:\R\rightarrow\R$ such that $\E|f'(X)|<\infty$ for $X\sim N(0,1)$. Here and throughout the paper, $\partial$ denotes the usual differential operator.

There is now a quite extensive literature (for an overview see \cite{a21,ley}) on the problem of finding Stein operators for new target distributions, which establishes necessity of the characterisation and inolves finding an operator $\mathcal{S}$ acting on a class of functions $\mathcal{F}$ such that, if $W\sim \mu$, then $\E[\mathcal{S}f(W)]=0$ for all $f\in\mathcal{F}$. Stein operators have been found for target distributions as complex as the product of $p\geq1$ independent standard Gaussian random variables \cite{gaunt-pn}, linear combinations of $p\geq1$ independent gamma distributions \cite{aaps19b}, and an algorithm has been obtained that yields (amongst other target distributions) all Stein operators for $H_p(X)$, $p\geq1$, where $X\sim N(0,1)$ and $H_p(x)=(-1)^p\mathrm{e}^{x^2/2}\rg{\partial^p}(\mathrm{e}^{-x^2/2})$ is the $p$-th Hermite polynomial \cite{agg19}.

There are also several standard approaches to proving sufficiency of a Stein characterisation, which we briefly review. Let $Y\sim \mu$. All Stein operators obtained by the generator method \cite{barbour1,barbour2,gotze} (recognising $\mathcal{S}$ as the generator of a Markov process with stationary distribution $\mu$) are characterising by basic theory of Markov processes. If $\mu$ is determined by its moments, then plugging $f_k(y)=y^k$, $k=0,1,2,\ldots$, into $\E[\mathcal{S}f(W)]=0$ yields a moment sequence for $W$ which if shown to be equal to the moment sequence of $Y\sim \mu$ proves that $W\sim \mu$ (see the proof of Lemma 5.2 of \cite{ross} for an example). If $\mu$ is not necessarily determined by its moments, one can consider the \emph{Stein equation} $\mathcal{S}f_z(y)=h_z(y)-\E[h(Y)]$ with indicator test functions $h_z(y)=\mathbf{1}(y\leq z)$. If it can be proved that the solution $f_z\in\mathcal{F}$ for all $z\in\R$, then $0=\E[\mathcal{S}f_z(W)]=\P(W\leq z)-\P(Y\leq z)$ for all $z\in\R$, meaning that $\mathcal{L}(W)=\mathcal{L}(Y)$. For an example of this apporach see the proof of Lemma 2.1 of \cite{chen}. This approach has also been used to established Stein characterisations for wide classes of distributions by \cite{ls13,schoutens,stein04}.   Another approach involves taking $f(y)=\mathrm{e}^{\mathrm{i}ty}$ in  $\E[\mathcal{S}f(W)]=0$ and solving the resulting ordinary differential equation (ODE) for the characteristic function of $W$ and deducing that it is equal to the characteristic function of $Y\sim \mu$ (see Exercise 3.8.1 of \cite{n-p-book} for an example). 

The above techniques can be used to arrive at simple proofs of sufficiency of Stein characterisations for a number of classical distributions such as the Gaussian, beta and gamma. However, there exist many distributions for which Stein operators have been obtained, but which have yet to be proved to be characterising and for which the above techniques are not applicable. As an example, which was the original motivation for this paper, consider the Stein operators for the Gaussian Hermite polynomials $H_p(X)$, $p\geq3$ and $X\sim N(0,1)$, some of which are collected in Appendix \ref{appendixa}. These Stein operators were recently obtained by \cite{agg19}, and represent a first step towards extending the Malliavin-Stein method for target distributions of the form $P(X)$, where $P$ is a polynomial of degree greater than two, a class of target distributions identified by \cite{peccati14} to be of particular importance.  For $p=3$ and $p\geq5$, $H_p(X)$ is not determined by its moments. Also, with the exception of the Stein operator (\ref{h4gen}) for $H_4(X)$, all the Stein operators involve third or higher order differential operators, so the Stein operators cannot be recognised as the generator of a Markov process (there does not exist a generator of order greater than 2 for a stochastic process; see Theorem 1.4 and Example 1.1 of Chapter
7 of \cite{durrett}). Moreover, no solution is known to either the corresponding Stein equations or the ODEs satisfied by the characteristic functions derived from the Stein operator. As such there is a need to develop new techniques for proving that Stein operators characterise their target distributions.

In this paper, we introduce a new approach to proving sufficiency of Stein characterisations. The approach, which is described in detail in Section \ref{sec2}, starts similarly to the characteristic function approach described above in that one plugs $f(y)=\mathrm{e}^{\mathrm{i}ty}$ into $\E[\mathcal{S}f(W)]=0$ to find an ODE satisfied by the characteristic function of $W$. For distributions such as $H_p(X)$, $p\geq3$, it may not be possible to solve the ODE exactly. Our approach, which we call the \emph{asymptotic approach} to proving sufficiency of Stein characterisations, is to perform an asymptotic analysis to prove that there can only be one solution that satisfies the properties of characteristic functions; we know that this characteristic function must be that of the target distribution $\mu$, and so we conclude that $W\sim \mu$. 

\rg{In this paper, we restrict our attention to absolutely continuous univariate distributions; however, our basic approach applies to other types of distributions. For absolutely continuous multivariate distributions, on $\mathbb{R}^d$ say, for which Stein operators are partial differential operators, we would take $f(y)=\mathrm{e}^{\mathrm{i}t^\intercal y}$, for $t,y\in\mathbb{R}^d$ to obtain a PDE satisfied by the characteristic function of the target random variable. One would then perform an asymptotic analysis on the solutions of this PDE to show that there can only be one solution that satisfies the properties of characteristic functions. For discrete univariate distributions, for which Stein operators are difference operators, we would again take $f(y)=\mathrm{e}^{\mathrm{i}ty}$ to obtain a difference equation satisfied by the characteristic function of the target random variable. There is also a well-developed theory for performing asymptotic analysis on solutions of difference equations; see \cite[Chapter 5]{bender}.}

In Section \ref{sec3}, we use the asymptotic approach to prove that a number of Stein operators from the literature are characterising. We begin the section, however, by giving a simple example of a Stein operator that is not characterising unless a suitable additional condition is given (Section \ref{secnon}). The characteristic function approach to proving Stein characterisations allows us to efficiently construct such an example, which, as far as we are aware, is the first example in the literature of a Stein operator that is not characterising unless suitable additional conditions are specified. The take home message is that one cannot expect a Stein operator to be characterising. The rest of the section is devoted to establishing Stein characterisations. We show that all polynomial Stein operators with linear coefficients are characterising, and are able to verify on a case-by-case basis that all polynomial Stein operators in the literature with coefficients of degree at most two are characterising. We also prove that the Stein operators of \cite{agg19} for $H_p(X)$, $p=3,4,\ldots,8$, with coefficients of minimal possible degree characterise their target distribution, as well as the Stein operators for $H_3(X)$ and $H_4(X)$ with lowest possible order of derivatives.  The Stein operators for the products of $p=3,4,\ldots,8$ independent standard Gaussian random variables are also shown to be characterising. In each of these settings, it is known in the cases $p=1,2$ that the Stein operators are characterising, and this can in fact easily be proved because if $X_1$ and $X_2$ are independent standard Gaussian random variables, the distributions of $H_1(X_1)$, $H_2(X_1)$, $X_1$ and $X_1X_2$ are determined by their moments \cite{Slud}. The distributions of $H_p(X_1)$, $p=3$, $p\geq5$ and the product of $p\geq3$ standard Gaussian random variables are, however, not determined by their moments \cite{Slud,sld14}; the distribution of $H_4(X)$ is determined by its moments \cite{Slud}. 


Our proofs in Section \ref{sec3} utilise classical techniques for obtaining asymptotic approximations of solutions to ODEs in the neighbourhood of singularities (see Chapter 3 of \cite{bender} for a detailed account of these methods). These proofs may serve as useful illustrative examples for practitioners of Stein's method wishing to use the asymptotic approach to proving sufficiency of Stein characterisations in their own research. To aid such readers we have provided Remark \ref{remcharcc} which discusses some of the difficulties in applying the asymptotic approach and explains how there may be situations in which it will break down.  

In Section \ref{sec4}, we leverage our Stein characterisations of $H_3(X)$ and $H_4(X)$ to establish ``Gamma characterisations" of these distributions (Proposition \ref{prop:Gamma-Type-Expressions}), that is characterisations of the target distributions in terms of iterated gamma operators from Malliavin calculus \cite{np10}. Gamma characterisations are often utilised in proofs of quantitative limit theorems on Wiener space via the Nourdin-Peccati Malliavin-Stein method \cite{np09,n-p-book}. As we elaborate on in Remark \ref{sec4rem}, our Gamma characterisations of $H_3(X)$ and $H_4(X)$ differ in two major features in comparison with Gamma characterisations of random variables belonging to the second Wiener chaos. This provides insight into the Malliavin-Stein method for target distributions belonging to third and higher order Wiener chaoses, and together with the rather complex Stein operators for $H_p(X)$, $p\geq3$, suggest that applying the Malliavin-Stein method to such target distributions is significantly more difficult than for distributions for the first and second Wiener chaoses.  \rg{ Furthermore, we illustrate a potential application of our results in the setup of asymptotic theory of $U$-statistics, see e.g., \cite{Lee_Ustatistics}.}


\vspace{3mm}

\noindent{\emph{Note on the class of functions $\mathcal{F}$}:} Consider the polynomial Stein operator $\mathcal{S}=\sum_{t=0}^Tp_t(y)\partial^t$, with $\max_{0\leq t\leq T}\mathrm{deg}(p_t(y))=m$, for the target random variable $Y$, supported on $I\subseteq\R$. Throughout this paper, unless otherwise stated, the class of functions on which our polynomial Stein operators act is the class $\mathcal{F}_{\mathcal{S},Y}$, which is defined to be the set of all functions $f\in C^T(I)$ such that $\E|Y^jf^{(t)}(Y)|<\infty$ for all $t=0,\ldots,T$ and $j=0,\ldots,m$.  
We do not claim that this is the largest class of functions on which the Stein operators used in this paper act, but the class guarantees that $\E[\mathcal{S}f(Y)]=0$, $\forall f\in\mathcal{F}$, and the class also contains the real and imaginary parts of the functions $f(y)=\mathrm{e}^{\mathrm{i}ty}$, $t\in\R$. 

\section{Description of the approach and first results}\label{sec2}

In this section, we provide an exposition of our asymptotic approach to proving sufficiency of Stein characterisations. We also show how the approach can be used to prove that certain classes of Stein operators characterise their target distributions (Theorems \ref{linearprop} and \ref{prop2ndode}). 
We begin by noting the following simple lemma.

\begin{lem}\label{lemcf}Let $\mathcal{S}=\sum_{i=0}^m\sum_{j=0}^T a_{i,j}y^i\partial^j$ be a Stein operator for the continuous random variable $Y$, with bounded absolute $m$-th moment and support on $I\subseteq\R$, in that 
	\begin{equation}\label{Aexpect}\E  [\mathcal{S}f(Y)]=0
	\end{equation}
	for all $f\in\mathcal{F}\subseteq\mathcal{G}$, where $\mathcal{G}$ is the class of all $f\in C^T(I)$ such that $\E  |\mathcal{S}f(Y)|<\infty$.  Then $\phi_Y(t)=\E  [\mathrm{e}^{\mathrm{i}tY}]$, the characteristic function of $Y$, is a solution to the 
	ODE
\begin{equation}\label{cfode}\sum_{i=0}^m\sum_{j=0}^T a_{i,j}\mathrm{i}^{j-i}t^j\phi^{(i)}(t)=0.
\end{equation}
If the only solution to (\ref{cfode}) that has the property of being a characteristic function of a real-valued random variable is $\phi_Y(t)=\E  [\mathrm{e}^{\mathrm{i}tY}]$, then the following converse holds: Suppose $W$ is a real-valued random variable with $\E  [|W|^m]<\infty$. If $\E  [\mathcal{S}f(W)]=0$ for all $f\in\mathcal{F}$, then $W$ is equal in law to $Y$.
\end{lem}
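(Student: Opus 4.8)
The plan is to establish two implications. First, I would show that $\phi_Y$ necessarily satisfies the ODE \eqref{cfode}; this is the ``forward'' direction and should be routine. Starting from the hypothesis $\E[\mathcal{S}f(Y)]=0$ for all $f\in\mathcal{F}$, I would substitute $f(y)=\mathrm{e}^{\mathrm{i}ty}$ (more precisely, its real and imaginary parts, which lie in $\mathcal{F}$ by the note on the class of functions). Since $\partial^j\mathrm{e}^{\mathrm{i}ty}=(\mathrm{i}t)^j\mathrm{e}^{\mathrm{i}ty}$, the operator acts as $\mathcal{S}\mathrm{e}^{\mathrm{i}ty}=\sum_{i,j}a_{i,j}y^i(\mathrm{i}t)^j\mathrm{e}^{\mathrm{i}ty}$. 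Taking expectations and exchanging $\E$ with the $t$-derivatives, I would use the identity $\E[Y^i\mathrm{e}^{\mathrm{i}tY}]=\mathrm{i}^{-i}\partial_t^i\,\E[\mathrm{e}^{\mathrm{i}tY}]=\mathrm{i}^{-i}\phi_Y^{(i)}(t)$, which is justified because $Y$ has bounded absolute $m$-th moment and the sum runs only up to $i=m$. Collecting terms yields exactly $\sum_{i=0}^m\sum_{j=0}^T a_{i,j}\mathrm{i}^{j-i}t^j\phi_Y^{(i)}(t)=0$, as claimed.

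Next I would prove the converse under the stated uniqueness hypothesis. Suppose $W$ is real-valued with $\E[|W|^m]<\infty$ and $\E[\mathcal{S}f(W)]=0$ for all $f\in\mathcal{F}$. The key observation is that the computation above used only the moment bound and the membership of the trigonometric functions in $\mathcal{F}$; it did not use that $Y$ is the target. Hence the same substitution $f(y)=\mathrm{e}^{\mathrm{i}ty}$ applied to $W$ shows that $\phi_W(t)=\E[\mathrm{e}^{\mathrm{i}tW}]$ is \emph{also} a solution of \eqref{cfode}. Now $\phi_W$ is, by construction, a characteristic function of a real-valued random variable. By hypothesis the \emph{only} such solution of \eqref{cfode} is $\phi_Y$, so $\phi_W\equiv\phi_Y$. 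Since characteristic functions determine distributions uniquely, $W$ is equal in law to $Y$.

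The step requiring the most care is the differentiation-under-the-expectation that turns $\E[Y^i\mathrm{e}^{\mathrm{i}tY}]$ into $\phi_Y^{(i)}(t)$, and the parallel step for $W$. I would verify that $\phi_Y$ is $m$-times continuously differentiable with $\phi_Y^{(i)}(t)=\E[(\mathrm{i}Y)^i\mathrm{e}^{\mathrm{i}tY}]$ for $0\le i\le m$; this is the standard fact that a finite absolute $m$-th moment guarantees that the characteristic function is $C^m$ and its derivatives are obtained by differentiating under the integral sign. The same argument applies verbatim to $W$ since $\E[|W|^m]<\infty$. I should also confirm that the real and imaginary parts of $y\mapsto\mathrm{e}^{\mathrm{i}ty}$ genuinely belong to $\mathcal{F}$, so that the defining relation $\E[\mathcal{S}f(\cdot)]=0$ may legitimately be invoked for these test functions — this is precisely what the note on the class of functions guarantees, and it is the subtlety that makes the class $\mathcal{F}_{\mathcal{S},Y}$ the right choice.

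The genuine mathematical obstacle is not contained in this lemma at all: it is the uniqueness hypothesis itself, namely that $\phi_Y$ is the \emph{only} solution of \eqref{cfode} with the property of being a characteristic function. This lemma is deliberately conditional on that hypothesis, and the whole point of the asymptotic approach developed in the rest of the paper is to supply, for each target distribution of interest, an independent proof that this uniqueness holds — typically by analysing the behaviour of solutions of \eqref{cfode} near its singular points and ruling out all candidate solutions except $\phi_Y$. Within the proof of the present statement, therefore, I would simply invoke the hypothesis; the hard analytic work is deferred to the case studies in Section \ref{sec3}.
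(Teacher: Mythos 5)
Your proposal is correct and follows essentially the same route as the paper's own proof: substitute the real and imaginary parts of $f(y)=\mathrm{e}^{\mathrm{i}ty}$, use $\phi^{(k)}(t)=\mathrm{i}^k\E[Y^k\mathrm{e}^{\mathrm{i}tY}]$ (valid by the $m$-th moment bound), repeat for $W$, and invoke the uniqueness hypothesis together with the fact that characteristic functions determine laws. The extra care you give to differentiation under the expectation is a sensible elaboration of what the paper states in passing.
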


\begin{proof}That $\phi_Y(t)$ satisfies the ODE (\ref{cfode}) follows from setting $f(y)=\mathrm{e}^{\mathrm{i}ty}$ in (\ref{Aexpect}) and using that $\phi_Y^{(k)}(t)=\mathrm{i}^k\E  [Y^k\mathrm{e}^{\mathrm{i}tY}]$ (with the first $m$ derivatives existing because $\E  [|Y|^m]<\infty$).  It should be noted that $f(y) = \mathrm{e}^{\mathrm{i}ty}$ is complex-valued; here we have applied (\ref{Aexpect}) to the real and imaginary parts of $f$.
Similarly, since  $\E  [|W|^m]<\infty$ and $\E  [\mathcal{S}f(W)]=0$ for all $f\in\mathcal{F}$, it follows that $\phi_W(t)=\E  [\mathrm{e}^{\mathrm{i}tW}]$, the characteristic function of $W$, also satisfies the ODE (\ref{cfode}). If the only solution to (\ref{cfode}) that has the property of being a characteristic function of a real-valued random variable is $\phi_Y(t)$, then it follows that $\phi_W(t)=\phi_Y(t)$, and so by the uniqueness of characteristic functions we conclude that $W$ is equal in law to $Y$.
\end{proof}

An immediate consequence is that all Stein operators with linear coefficients for random variables with bounded absolute first moment are characterising.  The result is a reformulation of part of Lemma 2.1 of \cite{aaps19b}.

\begin{thm}\label{linearprop}Let $\mathcal{S}=\sum_{j=0}^T(a_{0,j}+a_{1,j}y)\partial^j$ be a Stein operator, acting on the class of functions $\mathcal{F}_{\mathcal{S},Y}$, for the continuous random variable $Y$, with bounded absolute first moment. Suppose $W$ is a real-valued random variable with $\E  |W|<\infty$. If $\E  [\mathcal{S}f(W)]=0$ for all $f\in\mathcal{F}_{\mathcal{S},Y}$, then $W$ is equal in law to $Y$. 
\end{thm}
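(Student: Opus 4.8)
The plan is to invoke Lemma~\ref{lemcf} and to exploit the structural simplification forced by the coefficients being linear. Since the maximal degree of the coefficient polynomials is $m=1$, the general ODE (\ref{cfode}) collapses to a \emph{first-order} linear ODE for the characteristic function: writing it out with the index $i$ restricted to $\{0,1\}$, any characteristic function $\phi$ coming from a solution of $\E[\mathcal{S}f(W)]=0$ must satisfy $B(t)\phi'(t)+A(t)\phi(t)=0$, where $A(t)=\sum_{j=0}^T a_{0,j}\mathrm{i}^{j}t^j$ and $B(t)=\sum_{j=0}^T a_{1,j}\mathrm{i}^{j-1}t^j$ are explicit polynomials of degree at most $T$. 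Because a first-order linear ODE is exactly solvable, no genuine asymptotic analysis is needed here; by Lemma~\ref{lemcf} it therefore suffices to show that $\phi_Y$ is the \emph{only} characteristic function solving this equation.

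First I would solve the ODE by the integrating-factor method on any open interval on which $B$ does not vanish: there the solution space is one-dimensional, every solution having the form $\phi(t)=c\exp\big(-\int_{t_0}^t A(s)/B(s)\,\mathrm{d}s\big)$. Since both $\phi_W$ and $\phi_Y$ are characteristic functions, they satisfy $\phi_W(0)=\phi_Y(0)=1$, so in the generic case $B(0)\neq0$ (which holds precisely when $a_{1,0}\neq0$) the normalisation at the origin fixes the constant $c$ identically for both, giving $\phi_W\equiv\phi_Y$ on the connected component of $\{t\in\R:\,B(t)\neq0\}$ containing the origin. The degenerate case $B(0)=0$ is handled similarly: evaluating the ODE at $t=0$ then forces the compatibility condition $A(0)=a_{0,0}=0$, after which one argues on the adjacent intervals exactly as below.

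The main obstacle, and where the real content lies, is continuing this identity across the finitely many real zeros of the polynomial $B$, which are precisely the singular points of the ODE. I would work with the difference $\psi=\phi_W-\phi_Y$, which by linearity also solves $B\psi'+A\psi=0$ and which is continuous, bounded, and vanishes identically on the central interval. At a (simple) zero $t_0$ of $B$ the equation has a regular singular point, and the local behaviour of solutions is governed by the indicial exponent $A(t_0)/B'(t_0)$, so that nontrivial solutions behave like $(t-t_0)^{A(t_0)/B'(t_0)}$ up to analytic and possibly logarithmic factors. The key point is that only the branch compatible with $\psi$ being continuous and bounded with $\psi(t_0)=0$ can survive: a case analysis on the real part of this exponent shows that any nonzero continuation constant would force $\psi$ either to blow up or to fail to match the value $0$ at $t_0$, contradicting that $\psi$ is a difference of characteristic functions. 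Hence $\psi$ must remain identically zero on each successive interval.

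Iterating across all zeros of $B$ yields $\phi_W\equiv\phi_Y$ on all of $\R$, and the uniqueness theorem for characteristic functions then gives that $W$ is equal in law to $Y$. I expect the delicate step to be exactly this singular-point analysis: one must rule out the spurious solution branch at each real zero of $B$ using only the qualitative properties available for characteristic functions, and extra care is needed when a zero is not simple or when the indicial exponent has nonpositive real part, since then continuity alone is insufficient and the boundedness of $\psi$ together with its vanishing at $t_0$ must be used more carefully.
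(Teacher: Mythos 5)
Your first two paragraphs reproduce the paper's proof: the paper likewise applies Lemma \ref{lemcf} to reduce to the first-order boundary value problem (\ref{cfode2}) with $\phi(0)=1$ and concludes in one line from uniqueness for first-order homogeneous linear ODEs. Where you go further is in worrying about the real zeros of the leading coefficient $B(t)=\sum_{j=0}^{T}a_{1,j}\mathrm{i}^{j-1}t^j$, which the paper's proof passes over in silence; this is a genuine subtlety, but your proposed treatment of it does not close.

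The problem is the case analysis at a simple real zero $t_0$ of $B$. From $\psi'/\psi=-A/B$ the nontrivial local solution behaves like $(t-t_0)^{\rho}$ with $\rho=-A(t_0)/B'(t_0)$ (note the sign, which you have reversed). When $\mathrm{Re}(\rho)<0$ the solution blows up and boundedness excludes it; when $\mathrm{Re}(\rho)=0$ with $\rho\neq0$ it oscillates without a limit, so continuity together with $\psi(t_0)=0$ excludes it. But when $\mathrm{Re}(\rho)>0$ the nontrivial branch is bounded, continuous, and vanishes at $t_0$: it is \emph{not} excluded by any of the qualitative properties you invoke, so the claim that ``any nonzero continuation constant would force $\psi$ either to blow up or to fail to match the value $0$ at $t_0$'' is false in precisely this case. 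The $C^1$ regularity of $\psi$ coming from $\E|W|<\infty$ and $\E|Y|<\infty$ does rule out $0<\mathrm{Re}(\rho)<1$ (the derivative of $(t-t_0)^{\rho}$ blows up), but for $\mathrm{Re}(\rho)\geq1$ the spurious branch is $C^1$ with derivative $0$ at $t_0$ and matches the zero solution to first order, so nothing in your argument eliminates it; some further input would be needed, e.g.\ positive definiteness of $\phi_W$, structural information about which $B$ can occur for a genuine Stein operator, or the argument of Lemma 2.1 of \cite{aaps19b}, of which the theorem is stated to be a reformulation. Your closing caveat also points at the wrong regime: in the corrected convention it is the exponents with \emph{positive} real part, not nonpositive, that cause the difficulty.
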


\begin{proof}By Lemma \ref{lemcf} and assumption, $\phi_Y(t)=\E  [\mathrm{e}^{\mathrm{i}tY}]$ and $\phi_W(t)=\E  [\mathrm{e}^{\mathrm{i}tW}]$ are solutions to the boundary value problem
\begin{equation}\label{cfode2}\bigg(\sum_{j=0}^Ta_{1,j}\mathrm{i}^{j-1}t^j\bigg)\phi'(t)+\bigg(\sum_{j=0}^Ta_{0,j}\mathrm{i}^jt^j\bigg)\phi(t)=0, \quad \phi(0)=1.
\end{equation}
By the uniqueness of boundary value problems for first order homogeneous linear ODEs, the unique solution to (\ref{cfode2}) is given by $\phi_Y(t)=\phi_W(t)$, and so $W$ is equal in law to $Y$.  
\end{proof}

\begin{rem}Many Stein operators in the literature have linear coefficients; examples include the Gaussian \cite{stein}, gamma \cite{dz,luk}, variance-gamma \cite{gaunt vg} and McKay Type I distributions \cite{aaps19b}, as well as the product of two independent standard Gaussians \cite{gaunt-pn}  and linear combinations of gamma random variables \cite{aaps19b}.  All these Stein operators are already known to characterise the distribution through several other approaches.  Additionally, the Stein operators of \cite{gms19b} for the product of two independent Gaussian random variables with possibly non-zero means, and the Stein operators of \cite{gaunt-34} for the noncentral chi-square distribution and the distribution of $aX^2+bX+c$, $a,b,c\in\R$ and $X\sim N(0,1)$, have linear coefficients and thus characterise the distribution, a fact that was not noted in these works.
\end{rem}

As (\ref{cfode}) is a $m$-th order homogeneous linear ODE, the general solution can be expressed as
\begin{equation}\label{homsoln}\phi(t)=C_1\phi_1(t)+\cdots+C_{m-1}\phi_{m-1}(t)+C_m\phi_Y(t),
\end{equation}
where $\phi_1,\ldots,\phi_{m-1}$ and $\phi_Y$ are linearly independent solutions to (\ref{cfode}) and $C_1,\ldots,C_m$ are arbitrary constants.  If one can prove that the only way that $\phi(t)$, as given in (\ref{homsoln}), defines a characteristic function is for $C_1=\cdots=C_{m-1}=0$ then uniqueness has been established, since the condition $\phi(0)=1$ forces $C_m=1$. In this paper, we shall establish uniqueness by carrying out an asymptotic analysis in the limit $t\rightarrow0$.     By making use of the facts that the characteristic function $\phi_U(t)$ of a random variable $U$ is bounded at $t=0$, that if $\E[|U|^k]<\infty$ then $\phi_U^{(k)}(0)=\E[U^k]$, and that if $U$ is symmetric ($U=_d-U$) then $\phi_U(t)$ is a real-valued function of $t$, we are then able to prove that we must take $C_1=\cdots=C_{m-1}=0$. This approach is particularly powerful for Stein operators with polynomial coefficients of degree at most 2 (see Section \ref{sec3.1}), in which case we only need to prove that $C_1=0$. 

We remark that to prove that certain Stein operators are characterising it may be the case that an analysis as $t\rightarrow0$ does not allow one to prove sufficiency. In such cases, one may need to perform the analysis at another point, such as $t=\infty$, in which case identifying unbounded solutions, for example, may lead to a proof of sufficiency. A standard method of examining the behaviour of solutions to ODEs in the neighbour of a singularity at infinity involves making a change of variables $t=1/w$ and then performing an asymptotic analysis in the limit $w\rightarrow0$. This change of variables leads to longer calculations. Moreover, of the Stein operators we consider in Section \ref{sec3} there are examples in which we were able to deduce sufficiency from an analysis in the limit $t\rightarrow0$ but not as $t\rightarrow\infty$; of course there may be Stein operators for which the opposite is true.




In the following theorem, we establish some conditions under which a Stein operator with quadratic coefficents characterises the distribution, which will be used to prove items (i) and (iii) of Propostion \ref{propngb} and items (iii) and (iv) of Proposition \ref{prophpchar}.  The proof of the theorem involves an application of classical techniques for obtaining asymptotic approximations for solutions of ODEs in the neighbourhood of essential singularities; a detailed account of the techniques is given in Chapter 3 of \cite{bender}. 

  
\begin{thm}\label{prop2ndode}Let $\mathcal{S}=\sum_{j=0}^T (a_{0,j}+a_{1,j}y+a_{2,j}y^2)\partial^j$ be a Stein operator, acting on the class of functions $\mathcal{F}_{\mathcal{S},Y}$, for the continuous random variable $Y$ with $\E  [Y^2]<\infty$.  Consider the associated ODE
\begin{equation}\label{cfode2nd}\phi''(t)+p(t)\phi'(t)+q(t)\phi(t)=0,
\end{equation}
where
\begin{align*}p(t)=\frac{\sum_{j=0}^T a_{1,j}\mathrm{i}^{j-1}t^j}{\sum_{j=0}^T a_{2,j}\mathrm{i}^{j-2}t^j}, \quad q(t)=\frac{\sum_{j=0}^T a_{0,j}\mathrm{i}^{j}t^j}{\sum_{j=0}^T a_{2,j}\mathrm{i}^{j-2}t^j}.
\end{align*} Suppose that either
\begin{itemize}
\item [(i)] The function $p(t)$ has a pole at $t=0$ of order $\alpha$, where $\alpha\geq3$ is an odd number, with $\lim_{t\rightarrow0}t^\alpha p(t)=p_0>0$.

\item [(ii)]  The function $p(t)$ has the following asymptotic behaviour:
\[p(t)\sim\frac{a\mathrm{i}}{t^2}+\frac{b}{t}, \quad t\rightarrow0,\]
where $a\in\R\setminus\{0\}$ and $b\geq-2$.
\end{itemize}
\rg{Then the following holds:}
Suppose $W$ is a real-valued random variable with $\E  [W^2]<\infty$.  If $\E  [\mathcal{S}f(W)]=0$ for all $f\in\mathcal{F}_{\mathcal{S},Y}$, then $W$ is equal in law to $Y$.
\end{thm}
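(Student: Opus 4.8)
The plan is to reduce the statement, via Lemma \ref{lemcf} and the decomposition (\ref{homsoln}), to a purely analytic claim about the second-order ODE (\ref{cfode2nd}): namely that the only solution $\phi$ satisfying the defining properties of a characteristic function (boundedness at $t=0$, $\phi(0)=1$, and real-valuedness if the target is symmetric) is $\phi_Y$ itself. By Lemma \ref{lemcf} both $\phi_Y$ and $\phi_W$ solve (\ref{cfode}), which after dividing by the leading coefficient $\sum_j a_{2,j}\mathrm{i}^{j-2}t^j$ becomes (\ref{cfode2nd}). Since (\ref{cfode2nd}) is second order, its general solution is $\phi(t)=C_1\phi_1(t)+C_2\phi_Y(t)$ for a second linearly independent solution $\phi_1$, and the normalisation $\phi(0)=1$ together with $C_1=0$ would force $C_2=1$, giving $\phi_W=\phi_Y$. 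So the whole proof collapses to showing that any admissible characteristic function must have $C_1=0$, i.e. that the ``extra'' solution $\phi_1$ cannot appear in a characteristic function.

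The heart of the argument is an asymptotic analysis of the solutions of (\ref{cfode2nd}) near the singular point $t=0$, which is where cases (i) and (ii) diverge. In case (i), $p(t)$ has an odd-order pole of order $\alpha\geq3$ with $\lim_{t\to0}t^\alpha p(t)=p_0>0$, so $t=0$ is an irregular (essential) singularity. I would use the standard WKB/dominant-balance machinery from Chapter 3 of \cite{bender}: seek solutions of the form $\phi(t)=\exp(S(t))$, substitute into (\ref{cfode2nd}), and carry out the dominant-balance leading to $S'(t)\sim -p(t)$ for the subdominant branch and $S'(t)\sim -q(t)/p(t)$ for the other. Integrating $-p(t)\sim -p_0/t^\alpha$ produces a factor $\exp\big(p_0/((\alpha-1)t^{\alpha-1})\big)$; because $\alpha$ is odd, $\alpha-1$ is even, so $t^{\alpha-1}>0$ on both sides of the origin and with $p_0>0$ this exponential \emph{blows up} as $t\to0$ from either side. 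A genuine characteristic function is bounded (by $1$) near $t=0$, so this rapidly growing solution cannot be present; this is exactly the mechanism that kills the coefficient $C_1$ and forces $\phi=C_2\phi_Y$.

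In case (ii), the leading behaviour $p(t)\sim a\mathrm{i}/t^2 + b/t$ again makes $t=0$ an irregular singular point, and the same exponential-of-$S$ ansatz yields a subdominant solution behaving like $\exp(a\mathrm{i}/t)\,t^{b}$ up to a bounded factor. Here the term $\exp(a\mathrm{i}/t)$ has modulus one, so boundedness alone does not immediately eliminate it; instead one exploits the oscillatory factor $\exp(a\mathrm{i}/t)$, which oscillates ever faster as $t\to0$. The admissibility conditions to bring to bear are that $\phi$ must be twice differentiable at $t=0$ with $\phi(0)=1$, $\phi'(0)=\mathrm{i}\E[Y]$, $\phi''(0)=-\E[Y^2]$ finite (using $\E[W^2]<\infty$), and that the characteristic function of a symmetric target is real-valued. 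The role of $b\geq-2$ is to control the algebraic prefactor $t^{b}$ so that the product $\exp(a\mathrm{i}/t)t^{b}$ fails to be smooth at the origin (its derivatives pick up negative powers of $t$ times the nonvanishing oscillation and cannot tend to a finite limit), contradicting the required differentiability/boundedness of the moments unless $C_1=0$.

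The main obstacle I anticipate is case (ii): unlike the blow-up in case (i), the offending solution has unit modulus, so ruling it out requires a more delicate argument that the oscillatory-singular solution is incompatible with the \emph{regularity} a characteristic function must possess at $t=0$ (finite moments up to second order, and reality under symmetry), rather than mere boundedness. Making the dominant-balance asymptotics rigorous — in particular justifying that the formal WKB expansion genuinely captures the leading behaviour of an actual solution basis near an irregular singularity, and that the bounded-but-oscillatory branch is truly excluded by the moment/reality constraints rather than merely by boundedness — is where the careful work lies; I would lean on the rigorous asymptotic-matching results of \cite[Chapter 3]{bender} to control the error terms and on the structure of the subdominant/dominant pair to isolate $C_1$.
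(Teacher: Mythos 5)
Your overall strategy is the paper's: reduce via Lemma \ref{lemcf} to showing that the second linearly independent solution of (\ref{cfode2nd}) violates a property of characteristic functions, then use the WKB ansatz $\phi=\mathrm{e}^{S(t)}$ and dominant balance at the irregular singularity $t=0$. Case (i) is handled exactly as in the paper and is fine: $S'\sim-p(t)\sim-p_0/t^{\alpha}$ integrates to $\exp\big(p_0/((\alpha-1)t^{\alpha-1})\big)$, which blows up on both sides of the origin because $\alpha-1$ is even and $p_0>0$.

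In case (ii), however, there is a concrete gap. You assert that the offending solution behaves like $t^{b}\mathrm{e}^{a\mathrm{i}/t}$, but you never carry out the second-order refinement that determines the algebraic prefactor, and the exponent you guess is wrong. Writing $S(t)=a\mathrm{i}/t+C(t)$ with $C(t)\ll1/t$ and balancing the resulting equation (using $p(t)\sim a\mathrm{i}/t^2+b/t+c\mathrm{i}$ and $q(t)=O(t^{-2})$) gives $C(t)\sim(2-b)\log t$, hence $\phi_1(t)\sim t^{2-b}\mathrm{e}^{a\mathrm{i}/t}$ and $\phi_1''(t)\sim-a^2t^{-b-2}\mathrm{e}^{a\mathrm{i}/t}$; the hypothesis $b\geq-2$ is exactly what makes $-b-2\leq0$, so $\phi_1''$ has no finite limit at $t=0$, contradicting $\E[W^2]<\infty$. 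With your prefactor $t^{b}$ one would instead get $\phi_1''\sim-a^2t^{b-4}\mathrm{e}^{a\mathrm{i}/t}$, which is perfectly twice differentiable at the origin whenever $b>4$, so the condition $b\geq-2$ would not rule out the extra solution and the argument as you state it does not prove the theorem. (The appeal to real-valuedness under symmetry is not needed here and is not used in the paper's proof of this theorem; the only admissibility property invoked in case (ii) is twice-differentiability at $t=0$.) You should also record, as the paper does, why the two dominant-balance branches are $S'=O(1)$ and $S'\sim-p$: the pole of $q$ at $0$ cannot exceed that of $p$ because $\phi_Y$ is a solution with $\phi_Y(0)=1$ and finite first two derivatives, whence $p^2\gg q$ as $t\to0$.
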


\begin{rem} Theorem \ref{prop2ndode} covers only a fraction of the classes of Stein operators with quadratic coefficients to which the general approach described in this section can be applied.  Other classes include Stein operators whose associated characteristic function ODEs have essential singularities with even order greater than 2, regular singular points, singularities at points other than the origin, and ODEs in which $p(t)$ and $q(t)$ are both analytic.  As different asymptotic formulas are available in each of these cases, for reasons of brevity, we do not explore this further in this paper.
For polynomial Stein operators with quadratic coefficients that do not fall into the setting of Theorem \ref{prop2ndode}, we establish sufficiency on a case-by-case basis in Section \ref{sec3.1}.
\end{rem}

\begin{proof}The basic approach we shall take, in cases (i) and (ii), is to prove, though an asymptotic approach, that there is a solution to (\ref{cfode2nd}) that is either unbounded at $t=0$ or not twice-differentiable at $t=0$ (note that we assume that $\E  [W^2]<\infty$ and so the characteristic function of $W$ must be twice-differentiable).  Given this behaviour, these solutions must be linearly independent of another solution $\phi_Y(t)$, the characteristic function of $Y$, and we can thus conclude that the only solution to (\ref{cfode2nd}) satisfying the properties of characteristic functions is $\phi_Y(t)$, and applying Lemma \ref{lemcf} then proves the lemma.

 In both cases (i) and (ii), the pole at $t=0$ is of order at least 2 and so there is an essential singularity at the origin.  Thus, we seek an approximation for one of the solutions using the ansatz $\phi(t)=\mathrm{e}^{S(t)}$ (see \cite[Section 3.4]{bender}). Substituting into (\ref{cfode2nd}) gives that
\begin{equation*}S''(t)+(S'(t))^2+p(t)S'(t)+q(t)=0.
\end{equation*}
 We now make the standard assumption that $S''(t)\ll(S'(t))^2$, $t\rightarrow 0$, (again, see \cite[Section 3.4]{bender}); which we shall verify later in the proof, from which we obtain
\begin{equation*}(S'(t))^2+p(t)S'(t)+q(t)\sim0, \quad t\rightarrow0.
\end{equation*}
The quadratic formula then gives
\[S'(t)\sim\frac{1}{2}\Big(-p(t)\pm\sqrt{p(t)^2-4q(t)}\Big), \quad t\rightarrow0.
\]
We now make the observation that the order of the pole of $p(t)$ at $t=0$ must be at least as great as the order of the pole of $q(t)$ at $t=0$.  This is because $\phi_Y(0)=1$ and as $\E  [Y^2]<\infty$, $\phi_Y'(t)$ and $\phi_Y''(t)$ are finite in the limit $t\rightarrow0$.  Therefore $p(t)^2\gg q(t)$, as $t\rightarrow0$, and so we have one solution for which $S'(t)=O(1)$ as $t\rightarrow0$ (which is consistent with the behaviour of a characteristic function), and another for which 
\[S'(t)\sim -p(t), \quad t\rightarrow0.\]
Note that in both cases the assumption $S''(t)\ll(S'(t))^2$, $t\rightarrow 0$, is satisfied.

In case (i), we have $S'(t)\sim -p_0/t^\alpha$, $t\rightarrow0$.  On integrating and inserting the result into the ansatz $\phi(t)=\mathrm{e}^{S(t)}$ we obtain the following asymptotic behaviour for a solution to (\ref{cfode2nd}):
\[\phi_1(t)=\exp\bigg(\frac{p_0}{(\alpha-1)t^{\alpha-1}}\big(1+o(1)\big)\bigg), \quad t\rightarrow0.\]
By our assumption $p_0>0$ (note that the assumption that $\alpha$ is an odd number ensures that $p_0$ is a real number), this solution is unbounded in the limit $t\rightarrow0$, and we can therefore conclude that the Stein operator is characterising.

In case (ii), we have $S'(t)\sim -a\mathrm{i}/t^2$, $t\rightarrow0$, for some $a\in\R\setminus\{0\}$.  (The fact that the pole is of even order ensures that the constant $a$ is real-valued.) Proceeding as in part (i), we obtain the following asymptotic behaviour for a solution to (\ref{cfode2nd}):
\[\phi_1(t)=\exp\bigg(\frac{a\mathrm{i}}{t}\big(1+o(1)\big)\bigg), \quad t\rightarrow0.\] 
In this case, we cannot yet conclude that $\phi_1$ is either not twice-differentiable or unbounded, and so we must work a little harder than for case (i).  Let us now seek the refined  approximation
\[S(t)=\frac{a\mathrm{i}}{t}+C(t),\]
for some $C(t)\ll 1/t$, $t\rightarrow0$.  Substituting into (\ref{cfode2nd}) gives that
\begin{align*}&\frac{2a\mathrm{i}}{t^3}+C''(t)+\bigg(-\frac{a\mathrm{i}}{t^2}+C'(t)\bigg)^2+p(t)\bigg(-\frac{a\mathrm{i}}{t^2}+C'(t)\bigg)+q(t)=0.
\end{align*}
We can obtain an asymptotic differential equation for $C(t)$ by arguing as follows.  Since $S'(t)\sim -a\mathrm{i}/t^2$, $t\rightarrow0$, we also have that $C'(t)\ll 1/t^{2}$, $t\rightarrow0$, meaning that also $(C'(t))^2\ll C'(t)/t^{2}$, $t\rightarrow0$.  Also, $p(t)\sim a\mathrm{i}/t^2+b/t+c\mathrm{i}$, $t\rightarrow0$, for some $a\in\R\setminus\{0\}$ and $b,c\in\R$, and $q(t)\sim d/t^2$, $t\rightarrow0$, for some $d\in\R$, since $q(t)$ has a pole of order at most 2 at $t=0$.  Such considerations lead to the asymptotic ODE
\begin{align}\label{asyode}\frac{a(2-b)\mathrm{i}}{t^3}+C''(t)-\frac{a\mathrm{i}}{t^2}C'(t)+\frac{ac+d}{t^2}\sim 0, \quad t\rightarrow0.
\end{align}
In the case $b=2$, the only solution to (\ref{asyode}) that satisfies $C(t)\ll 1/t$, $t\rightarrow0$, has the limiting form $C(t)\sim-\frac{ac+d}{a}\mathrm{i}t$, $t\rightarrow0$.  In the case $b\not=2$, it can be readily seen that $C(t)\sim (2-b)\log(t)$, $t\rightarrow0$, satisfies (\ref{asyode}), since $C''(t)\ll 1/t^3$, $t\rightarrow0$, and that this is the only solution that satisfies $C(t)\ll 1/t$, $t\rightarrow0$.  In summary, we have that
\[S(t)\sim\frac{a\mathrm{i}}{t}+(2-b)\log(t), \quad t\rightarrow0,\]
Therefore, we see that there is a solution to (\ref{cfode2nd}) with the following asymptotic behaviour:
\begin{equation*}\phi_1(t)\sim t^{2-b}\mathrm{e}^{a\mathrm{i}/t}, \quad t\rightarrow0.
\end{equation*}
We may differentiate this limiting form to obtain
\[\phi_1''(t)\sim -a^2t^{-b-2}\mathrm{e}^{a\mathrm{i}/t},\quad t\rightarrow0.\]
If $b\geq-2$, then this solution is not twice-differentiable at $t=0$.   The proof is complete.
\end{proof}



\section{Examples}\label{sec3}

\subsection{A non-characterising Stein operator}\label{secnon}
The purpose of this section is to provide a simple example of a Stein operator that is not characterising, unless an additional suitable condition is present.  Consider $X \sim N(0,1)$ a standard Gaussian distribution having characteristic function $\phi_X (t) =\mathrm{e}^{-t^2/2}$ and a semicircular random variable $Y$ with probability density function $f_Y(x) = (2/\pi) \sqrt{1-x^2} \, \textbf{1}_{  [-1,1] }$ supported on the interval $[-1,1]$ with characteristic function $\phi_Y (t) = 2 J_1(t)/t$, where here $J_\nu$ stands for the Bessel function of the first kind of order $\nu$. One should note that both distributions are moment determined. 

\rg{Let 
\begin{align*}
\mathcal{S}=(1-x^2)\partial^5+(x^3-4x)\partial^4+(5-2x^2)\partial^3+(3x^3-21x)\partial^2+9x2\partial-9x.
\end{align*}
Then, one can readily check using an integration by parts argument that the operator $\mathcal{S}$ is a polynomial Stein operator for both the standard Gaussian and semicircular distributions, namely that 
\[  \E \left[  \mathcal{S} f(X) \right] = \E \left[  \mathcal{S} f(Y) \right] =0, \quad \forall \,  f \in \mathcal{F}, \]
where the class of functions $\mathcal{F}$ contains all functions $f\in C^3(\R)$ such that $\E|X^jf^{(t)}(X)|<\infty$ and $\E|Y^jf^{(t)}(Y)|<\infty$ for all $t=0,\ldots,3$ and $j=0,\ldots,5$, as well as the real and imaginary parts of $f(y)=\mathrm{e}^{\mathrm{i}ty}$, $t\in\R$. Clearly, $\mathcal{S}$ cannot be a characterising Stein operator unless one proposes suitable additional conditions.

That $\mathcal{S}$ is a Stein operator for both the standard Gaussian and semicircular distributions can perhaps be most naturally seen by considering the associated ODE for the characteristic function. By Lemma \ref{lemcf}, the associated characteristic function ODE is
\begin{equation}\label{aode}\mathcal{A}\phi(t)=0,
\end{equation}
where
\[\mathcal{A}= (t^4-3t^2)\partial^3+(t^5-2t^3-9t)\partial^2+(4t^4-21t^2+9)\partial+t^5-5t^3.\]
One can readily check that $\mathcal{A} \phi_X (t) = \mathcal{A} \phi_Y (t) =0$. Through an analysis of the ODE (\ref{aode}) we can provide some additional conditions under which the operator $\mathcal{S}$ is a characterising Stein operator for the either the standard normal or semicircular distributions. The general solution of (\ref{aode}) is given by
\[\phi(t)=C_1\phi_X(t)+C_2\phi_Y(t)+C_3Y_1(t)/t,\]
where $C_1,C_2,C_3\in\mathbb{C}$ are arbitrary constants and $Y_\nu$ stands for the Bessel function of the second kind of order $\nu$. Now, $Y_1(t)/t\sim-(2/\pi)t^{-2}$, as $t\downarrow0$, (see \cite[Section 10.7]{Olver}) so we must take $C_3=0$. If we add the condition that $\E[X^2]=1$, then it is readily seen that we must take $C_2=0$ and $C_1=1$, and we deduce that $\mathcal{S}$ is a characterising Stein operator for the standard normal distribution. On the other hand, impose that $\E[Y^2]=1/4$, then it is readily seen that we must take $C_1=0$ and $C_2=1$, and we deduce that $\mathcal{S}$ is a characterising Stein operator for the semicircular distribution. 

 We stress that other conditions instead of second moment conditions, either involving higher moments or perhaps conditions of a different type like whether the support is bounded, could be included to ensure that $\mathcal{S}$ is a characterising Stein operator, although the key point is that some additional condition must be specified in order to ensure that the Stein operator is characterising, meaning that the characterising property of Stein operators cannot be taken for granted. We also remark that the strategy used to construct this non-characterising Stein operator applies much more widely than to the Gaussian and semicircular distribution; this is explored in detail in our forthcoming work \cite{agg22}.}

\subsection{Polynomial Stein operators with coefficients of maximal degree two}\label{sec3.1}

In this section, we verify on a case-by-case basis that all polynomial Stein operators in the literature with coefficients of maximal degree two are characterising. We begin by noting that a number of such Stein operators are already known to be characterising through other means, these include the generalized inverse Gaussian \cite{kl14}, beta \cite{dobler beta,goldstein4,schoutens}, Student's $t$-distribution \cite{schoutens}, $F$-distribution \cite{gms19}, inverse gamma \cite{gms19} and chi distribution \cite{gaunt-laplace}. The beta, Student's $t$, $F$, inverse gamma and chi distributions are members of the Pearson family, so sufficiency follows from Theorem 1 of \cite{schoutens}. The polynomial Stein operators (\ref{h3x}) and (\ref{h4x}) for $H_3(X)$ and $H_4(X)$, respectively, and the Stein operator (\ref{pnsteinop}) for the product of three independent standard Gaussian random variables also have coefficients with maximal degree two. We prove that these Stein operators are characterising in Propositions \ref{prophpchar} and \ref{propnnn}. In this section, we prove that the other polynomial Stein operators in the literature with coefficients of maximal degree two are characterising.
 
\vspace{3mm}

\noindent{(1)} Consider the PRR distribution (also known as the Kummer distribution) \cite{pekoz} with parameter $s>1/2$ (denoted by $K_s$), and density
\begin{equation*}p(x)=\Gamma(s)\sqrt{\frac{2}{\pi s}}\exp\bigg(-\frac{x^2}{2s}\bigg)U\bigg(s-1,\frac{1}{2},\frac{x^2}{2s}\bigg), \quad x>0,
\end{equation*}
where $U(a,b,x)$ denotes the confluent hypergeometric function of the second kind (see \cite[Chapter 13]{Olver}). A Stein operator (see \cite{pekoz}) is given by 
\begin{align*}\mathcal{S}_sf(y)=sf''(y)-yf'(y)-2(s-1)f(y),
\end{align*}
where the operator acts on twice differentiable functions $f:(0,\infty)\rightarrow\R$ with $f(0)=f'(0)=0$. Setting $g(y)=yf(y)$ (so that the values $g(0)$ and $g'(0)$ do not need to be specified) yields the Stein operator
\begin{align*}\mathcal{S}_s'g(y)=syg''(y) +(2s-y^2)g'(y)+(1-2s)y g(y).
\end{align*}
We now prove the following Stein characterisation, for which necessity is immediate from Lemma 3.1 of \cite{pekoz}, and our contribution is to prove sufficiency.

\begin{prop}\label{propprr} Let $W$ be a real-valued random variable with $\E[W^2]<\infty$. Then $W\sim K_s$ if and only if $\E[\mathcal{S}_s' g(W)]=0$ for all $g\in\mathcal{F}_{\mathcal{S}_s',Y}$, where $Y\sim K_s$.
\end{prop}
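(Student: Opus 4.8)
The plan is to invoke Lemma \ref{lemcf}. Reading off the coefficients of $\mathcal{S}_s'$ (namely $a_{1,2}=s$, $a_{0,1}=2s$, $a_{2,1}=-1$ and $a_{1,0}=1-2s$) and substituting into the general ODE (\ref{cfode}), one finds after dividing by $\mathrm{i}$ that the characteristic function of any admissible random variable — in particular both $\phi_Y$ and $\phi_W$ — must solve
\[t\phi''(t)+(st^2+2s-1)\phi'(t)+2st\phi(t)=0,\]
equivalently $\phi''+p(t)\phi'+q(t)\phi=0$ with $p(t)=st+(2s-1)/t$ and $q(t)=2s$. Crucially, this does \emph{not} fall under Theorem \ref{prop2ndode}: here $p$ has only a \emph{simple} pole at the origin and $q$ is analytic, so $t=0$ is a \emph{regular} singular point. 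The correct tool is therefore the method of Frobenius rather than the dominant-balance analysis of essential singularities; see \cite{bender}.

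First I would compute the indicial equation. Since $tp(t)\to 2s-1$ and $t^2q(t)\to 0$ as $t\rightarrow0$, the indicial equation is $r(r-1)+(2s-1)r=0$, i.e.\ $r(r+2s-2)=0$, with roots $r_1=0$ and $r_2=2-2s$. The root $r=0$ always yields an analytic power-series solution $\phi_1(t)=\sum_{n\geq0}a_nt^n$ with $a_0\neq0$; normalising $a_0=1$, this is the candidate corresponding to $\phi_Y$. The entire argument then reduces to showing that the second, linearly independent solution $\phi_2$ cannot be the characteristic function of a random variable with finite second moment, so that in the general solution $\phi=C_1\phi_1+C_2\phi_2$ we are forced to take $C_2=0$; the normalisation $\phi(0)=1$ gives $C_1=1$, hence $\phi_W=\phi_Y$, and $W$ is equal in law to $Y$ by Lemma \ref{lemcf}.

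The only real work lies in a case analysis on $s>1/2$, which governs the sign of $r_2=2-2s$. If $s>1$, then $r_2<0<r_1$, so $r_1=0$ is the larger root and the second Frobenius solution behaves like $\phi_2(t)\sim t^{2-2s}$ as $t\rightarrow0$, possibly with an added $C\phi_1(t)\log t$ term when $2s-2$ is a positive integer; either way $\phi_2$ is unbounded at the origin and is disqualified. If $s=1$, the roots coincide and the second solution carries a genuine logarithm, $\phi_2(t)\sim\log t\to-\infty$, again unbounded. If $1/2<s<1$, then $r_2=2-2s\in(0,1)$ is the larger root and differs from $r_1=0$ by a non-integer, so both Frobenius solutions are clean: here $\phi_2(t)\sim t^{2-2s}$ is bounded, but $\phi_2'(t)\sim(2-2s)t^{1-2s}$ with $1-2s\in(-1,0)$ blows up, so $\phi_2$ fails to be differentiable at $0$. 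Since $\E[W^2]<\infty$ forces $\phi_W\in C^2$ near the origin, $\phi_2$ is excluded in every regime.

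I expect the main obstacle to be the bookkeeping in the regime $s>1$: verifying that the possible logarithmic contribution arising when $2s-2\in\{1,2,\dots\}$ cannot rescue $\phi_2$ (it cannot, since the $t^{2-2s}$ term still dominates and is unbounded), and, more generally, making the ``excluded solution'' argument uniform by pinning down the leading-order behaviour of $\phi_2$ across all three cases. Once this is in place, combining with Lemma \ref{lemcf} completes the proof, with necessity supplied by Lemma 3.1 of \cite{pekoz}.
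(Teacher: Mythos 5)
Your proposal is correct and follows essentially the same route as the paper: the same characteristic-function ODE $t\phi''+(st^2+2s-1)\phi'+2st\phi=0$, the same Frobenius analysis at the regular singular point $t=0$ with indicial roots $0$ and $2-2s$, and the same three-way case split on $s$ (unbounded solution for $s>1$, logarithmic unbounded solution for $s=1$, non-differentiable derivative $\phi_2'(t)=O(t^{1-2s})$ for $1/2<s<1$), concluding via Lemma \ref{lemcf}. Your extra remark handling a possible logarithmic correction when $2s-2$ is a positive integer is a minor refinement the paper omits, but it does not change the argument.
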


\begin{proof}By Lemma \ref{lemcf}, the associated characteristic function ODE is given by 
\begin{equation}\label{lck}t\phi''(t)+(st^2+2s-1)\phi'(t)+2st\phi(t)=0.
\end{equation}
This ODE has a regular singularity at $t=0$, so, in a neighbourhood of $t=0$, we seek a Frobenius series solution $\phi(t)=t^\alpha\sum_{k=0}^\infty c_k t^k$, $ c_0\not=0$. Plugging into (\ref{lck}) shows that $\alpha$ satisfies the indicial equation $\alpha(\alpha-1)+\alpha(2s-1)=0$, that is $\alpha=0$ or $\alpha=2-2s$. The solution $\phi_1(t)$ with $\alpha=0$ is consistent with a characteristic function. Let us now consider the solution $\phi_2(t)$ with $\alpha=2-2s$. If $s>1$, then $\alpha=2-2s<0$ and the solution is unbounded. If $s=1$, then $\alpha=2-2s=0$, so by Fuch's Theorem, $\phi_2(t)=\log(t)\phi_1(t)+\sum_{k=0}^\infty b_kt^{k+r}$, where $b_0\not=0$ and $r\geq0$. Therefore $\phi_2(t)=O(\log(t))$ as $t\rightarrow0$, and is thus unbounded. Finally, if $1/2<s<1$, then $\phi_2'(t)=O(t^{1-2s})$ as $t\rightarrow0$, so the derivative is not well-defined at $t=0$, contradicting the assumption that $\E|W|<\infty$, which is implied by the assumption $\E[W^2]<\infty$. Appealing to Lemma \ref{lemcf} now gives us sufficiency.
\end{proof}

\noindent{(2)} We now consider the problem of proving sufficiency of Stein characterisations for products independent beta, gamma and centered Gaussian random variables. To fix notation, we  denote by $\mathrm{Beta}(a,b)$ the beta distribution with parameters $a,b>0$ and density $p(x)=x^{a-1}(1-x)^{b-1}/B(a,b)$, $0<x<1$, whilst we denote by $\Gamma(r,\lambda)$ the gamma distribution with parameters $r,\lambda>0$ and density $p(x)=\lambda^rx^{r-1}\mathrm{e}^{-\lambda x}/\Gamma(r)$, $x>0$. Let $B\sim\mathrm{beta}(a,b)$, $G_1\sim\Gamma(r,\lambda)$, $G_2\sim\Gamma(s,\lambda)$ and $X\sim N(0,\sigma^2)$ be mutually independent. Then the following Stein operators were obtained by \cite{gaunt-ngb} for the products $G_1X$, $BG_1$ and $G_1G_2$:
\begin{align*}\mathcal{S}_{G_1X}f(y)&=y^2f^{(3)}(y)+2(r+1)yf''(y)+r(r+1)f'(y)-(\lambda/\sigma^2)yf(y), \\
\mathcal{S}_{BG_1}f(y)&=y^2f''(y)+((a+r-1)y-y^2)f'(y)+(ar-(a+b)y)f(y), \\
\mathcal{S}_{G_1G_2}f(y)&=y^2f''(y)+(1+r+s)yf'(y)+(rs-\lambda^2y)f(y).
\end{align*}
Polynomial Stein operators with coefficients of degree at most two are available for the products of two and three independent standard Gaussian random variables. The product of two independent standard Gaussian is a special case of the variance-gamma distribution and sufficiency was established by \cite{gaunt vg}, whilst we consider the characterising problem for the case of three standard Gaussians in Section \ref{sec3.3}. We also note that other polynomial Stein operators are given in \cite{gaunt-ngb} for products of independent beta, gamma and Gaussian random variables, although the maximal degree of their polynomial coefficients is strictly greater than two. The asymptotic approach should be applicable to at least some of these operators, although we do not investigate this further in this paper.

\begin{prop}\label{propngb}
Let $W$ be a real-valued random variable with $\E[W^2]<\infty$. Then

\begin{itemize}

\item [(i)]  $W=_d G_1X$ if and only if $\E[\mathcal{S}_{G_1X}f(W)]=0$ for all $f\in\mathcal{F}_{\mathcal{S}_{G_1X},G_1X}$.

\item [(ii)]   $W=_d BG_1$ if and only if $\E[\mathcal{S}_{BG_1}f(W)]=0$ for all $f\in\mathcal{F}_{\mathcal{S}_{BG_1},BG_1}$.

\item [(iii)]  $W=_d G_1G_2$ if and only if $\E[\mathcal{S}_{G_1G_2}f(W)]=0$ for all $f\in\mathcal{F}_{\mathcal{S}_{G_1G_2},G_1G_2}$.

\end{itemize}
\end{prop}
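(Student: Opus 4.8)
The plan is to prove only sufficiency, since necessity of each characterisation is already contained in \cite{gaunt-ngb}. In every case sufficiency reduces to analysing the characteristic function ODE supplied by Lemma \ref{lemcf}: as each of the three operators has polynomial coefficients of maximal degree two, the associated ODE is a second order linear homogeneous equation. Writing it in the normalised form $\phi''+p(t)\phi'+q(t)\phi=0$, I would read off $p$ and $q$ and determine the nature of the singularity at $t=0$. I expect items (i) and (iii) to be immediate consequences of Theorem \ref{prop2ndode}, whereas item (ii) falls outside that theorem and is the real work.

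For item (i), substituting $y^i\partial^j\mapsto\mathrm{i}^{j-i}t^j\phi^{(i)}$ into $\mathcal{S}_{G_1X}$ and cancelling the common factor $\mathrm{i}$ yields
\begin{equation*}
t^3\phi''(t)+\big(2(r+1)t^2+\lambda/\sigma^2\big)\phi'(t)+r(r+1)t\,\phi(t)=0,
\end{equation*}
so $p(t)=2(r+1)/t+(\lambda/\sigma^2)/t^3$ has a pole of order $\alpha=3$ at the origin with $\lim_{t\to0}t^3p(t)=\lambda/\sigma^2>0$. Since $3$ is odd and the limit is positive, case (i) of Theorem \ref{prop2ndode} applies directly. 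For item (iii) the same substitution produces
\begin{equation*}
t^2\phi''(t)+\big((1+r+s)t+\lambda^2\mathrm{i}\big)\phi'(t)+rs\,\phi(t)=0,
\end{equation*}
whence $p(t)\sim\lambda^2\mathrm{i}/t^2+(1+r+s)/t$ as $t\to0$. This matches the hypothesis of case (ii) of Theorem \ref{prop2ndode} with $a=\lambda^2\in\R\setminus\{0\}$ and $b=1+r+s$; since $r,s>0$ we have $b>1\geq-2$, and the theorem again delivers sufficiency.

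The main obstacle is item (ii). Here the substitution gives
\begin{equation*}
t(t+\mathrm{i})\phi''(t)+\big((a+r-1)t+(a+b)\mathrm{i}\big)\phi'(t)+ar\,\phi(t)=0,
\end{equation*}
so the leading coefficient vanishes at $t=0$ (and at $t=-\mathrm{i}$). However $tp(t)$ and $t^2q(t)$ are analytic at the origin with $\lim_{t\to0}tp(t)=a+b$ and $\lim_{t\to0}t^2q(t)=0$, so $t=0$ is a \emph{regular} singular point rather than an essential singularity, and no dominant-balance ansatz is available. I would therefore mirror the Frobenius argument of Proposition \ref{propprr}: seeking $\phi(t)=t^\nu\sum_{k\geq0}c_kt^k$ with $c_0\neq0$ gives the indicial equation $\nu(\nu-1)+(a+b)\nu=0$, with roots $\nu=0$ and $\nu=1-a-b$. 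The root $\nu=0$ furnishes the analytic solution consistent with a characteristic function, while the second, linearly independent solution behaves like $t^{1-a-b}$ (possibly times $\log t$) near the origin.

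To finish I would rule out the second solution as a characteristic function of a random variable with $\E[W^2]<\infty$, splitting on the value of $a+b$ (recall $a,b>0$): if $a+b>1$ the exponent $1-a-b$ is negative, so the solution is unbounded at $t=0$; if $a+b=1$ the indicial roots coincide and Fuchs' theorem forces a $\log t$ term, again giving an unbounded solution; and if $0<a+b<1$ the exponent lies in $(0,1)$, so the first derivative of the second solution blows up like $t^{-a-b}$, contradicting the differentiability of $\phi_W$ guaranteed by $\E[W^2]<\infty$. In every case the only admissible solution is the analytic one, which must coincide with $\phi_{BG_1}$, and Lemma \ref{lemcf} then gives $W=_d BG_1$. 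The delicate points to watch are the borderline regime $a+b=1$ and, when $(a+b)-1$ is a positive integer, the possible appearance of logarithmic terms in the smaller-root solution; but in each such case the dominant $t^{1-a-b}$ behaviour is still singular, so the conclusion is unaffected.
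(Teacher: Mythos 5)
Your proposal is correct and follows essentially the same route as the paper: items (i) and (iii) are read off from parts (i) and (ii) of Theorem \ref{prop2ndode} with exactly the ODEs and parameter identifications you give, and item (ii) is handled by the same Frobenius/indicial-root argument modelled on Proposition \ref{propprr}, with the same three-way split on $a+b$ (which the paper states but leaves to the reader). Your explicit treatment of the coincident-root and integer-difference cases only fills in details the paper omits.
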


\begin{proof} 
(i) The associated characteristic function ODE is
\begin{equation*}t^3\phi''(t)+(2(r+1)t^2+\lambda/\sigma^2)\phi'(t)+r(r+1)t\phi(t)=0,
\end{equation*}
and sufficiency immediately follows from part (i) of Theorem \ref{prop2ndode}.

\vspace{2mm} 

\noindent{(ii)} The associated characteristic function ODE is
\begin{equation*}(t^2+\mathrm{i}t)\phi''(t)+((a+r-1)t+\mathrm{i}(a+b))\phi'(t)+ar\phi(t)=0.
\end{equation*}
This ODE has a regular singularity at $t=0$, so we proceed similarly to we did in the proof of Proposition \ref{propprr} for the PRR distribution. Seeking a solution of the form $\phi(t)=t^\alpha\sum_{k=0}^\infty c_kt^k$, $c_0\not=0$, in a neighbourhood of $t=0$ gives the indicial equation $\alpha(\alpha-1)+\alpha(a+b)=0$, so that $\alpha=0$ or $\alpha=1-a-b$. Sufficiency now follows from analysing the cases $a+b>1$, $a+b=1$, $0<a+b<1$ separately, like was done for the different cases of $s$ in the proof of Proposition \ref{propprr};  we omit the details.
\vspace{2mm} 

\noindent{(iii)} This time the associated characteristic function ODE is
\begin{equation*}t^2\phi''(t)+((1+r+s)t+\lambda^2\mathrm{i})\phi'(t)+rs\phi(t)=0,
\end{equation*}
and sufficiency immediately follows from part (ii) of Theorem \ref{prop2ndode}.
\end{proof}

\subsection{Stein operators for Gaussian Hermite polynomials}\label{sec3.2}

In our recent paper \cite{agg19}, we introduced an algorithm that can find all polynomial Stein operators up to a specified order $T$ and maximal polynomial degree $m$ of coefficients for a given Gaussian Hermite polynomial $H_p(X)$, $p\geq1$. In the following proposition, we prove that  the Stein operators for $H_p(X)$, $p=3,\ldots,8$, with minimum possible maximal degree $m$ (see Proposition 4.1 of \cite{agg19}) fully characterise their target distribution. Item (i) of the proposition also asserts that the Stein operator for $H_3(X)$ with $(T,m)=(4,3)$ (which in all likelihood has minimum order $T$ amongst Stein operators with zeroth-order term $cyf(y)$) is characterising.  (The Stein operator (\ref{h4x}) for $H_4(X)$ has both the minimum possible $T$ and $m$ amongst Stein operators with zeroth-order term $cyf(y)$.)  In addition, item (ii) of the proposition asserts that the Stein operator for $H_4(X)$ with $(T,m)=(2,3)$ (which in all likelihood has minimum order $T$ amongst polynomial Stein operators for $H_4(X)$) is characterising. 



\begin{prop}\label{prophpchar}Let $W$ be a real-valued random variable. Let $\mathcal{S}_3'$ and $\mathcal{S}_4'$ denote the Stein operators (\ref{h3x-new}) and (\ref{h4gen}) for $H_3(X)$ and $H_4(X)$ with $(T,m)=(4,3)$ and $(T,m)=(2,3)$, respectively. Also, for $p=3,\ldots,8$, let $\mathcal{S}_p$ be Stein operator for $H_p(X)$ that attains the minimum polynomial coefficient degree $m$.  For $p=3,4,5,6$, these are the Stein operators (\ref{h3x}), (\ref{h4x}), (\ref{h5x}) and (\ref{h6x}), respectively.  The complicated Stein operators for $H_7(X)$ with $(T,m)=(25,8)$ and $H_8(X)$ with $(T,m)=(10,4)$ are given in Appendix B of the arXiv version no.\ 1 of \cite{agg19}.  Then
\begin{itemize}

\item [(i)] Suppose $\E  [|W|^3]<\infty$. Then $W=_d H_3(X)$ if and only if $\E  [\mathcal{S}_3'f(W)]=0$ for all $f\in\mathcal{F}_{\mathcal{S}_3',H_3(X)}$.

\item [(ii)] Suppose $\E[W]=0$ and $\E[|W|^3]<\infty$. Then $W=_d H_4(X)$ if and only if $\E  [\mathcal{S}_4'f(W)]=0$ for all $f\in\mathcal{F}_{\mathcal{S}_4',H_4(X)}$.

\item [(iii)] 
Suppose $\E  [W^2]<\infty$. Then $W=_d H_3(X)$ if and only if $\E  [\mathcal{S}_3f(W)]=0$ for all $f\in\mathcal{F}_{\mathcal{S}_3,H_3(X)}$.

\item [(iv)] 
Suppose $\E  [W^2]<\infty$. Then $W=_d H_4(X)$ if and only if $\E  [\mathcal{S}_4f(W)]=0$ for all $f\in\mathcal{F}_{\mathcal{S}_4,H_4(X)}$.

\item [(v)] 
Suppose $W$ is a symmetric random variable ($W=_d -W$) such that $\E  [W^4]<\infty$. Then $W=_d H_5(X)$ if and only if $\E  [\mathcal{S}_5f(W)]=0$ for all $f\in\mathcal{F}_{\mathcal{S}_5,H_5(X)}$.

\item [(vi)] 
Suppose $\E  [|W|^3]<\infty$. Then $W=_d H_6(X)$ if and only if $\E  [\mathcal{S}_6f(W)]=0$ for all $f\in\mathcal{F}_{\mathcal{S}_6,H_6(X)}$.

\item [(vii)] 
Suppose $W$ is a symmetric random variable such that $\E  [W^6]<\infty$. Then $W=_d H_7(X)$ if and only if $\E  [\mathcal{S}_7f(W)]=0$ for all $f\in\mathcal{F}_{\mathcal{S}_7,H_7(X)}$.

\item [(viii)] 
Suppose $\E  [W^4]<\infty$. Then $W=_d H_8(X)$ if and only if $\E  [\mathcal{S}_8f(W)]=0$ for all $f\in\mathcal{F}_{\mathcal{S}_8,H_8(X)}$.

\end{itemize}
\end{prop}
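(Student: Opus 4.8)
The plan is to treat all eight items through the single mechanism of Lemma \ref{lemcf}. For each of the Stein operators $\mathcal{S}$ appearing in the statement, I would set $f(y)=\mathrm{e}^{\mathrm{i}ty}$ in $\E[\mathcal{S}f(W)]=0$ to obtain the associated characteristic function ODE (\ref{cfode}); this is a linear homogeneous ODE whose order equals the maximal degree $m$ of the polynomial coefficients of $\mathcal{S}$, so its general solution takes the form (\ref{homsoln}), namely $\phi=C_1\phi_1+\cdots+C_{m-1}\phi_{m-1}+C_m\phi_Y$, with $\phi_Y$ the characteristic function of the target $H_p(X)$. Necessity in each item is already supplied by \cite{agg19}, so the whole content is sufficiency, which by Lemma \ref{lemcf} reduces to showing that the only member of this solution family that can be the characteristic function of a random variable meeting the stated hypotheses is $\phi_Y$ itself; equivalently, that the hypotheses force $C_1=\cdots=C_{m-1}=0$, the normalisation $\phi(0)=1$ then forcing $C_m=1$.

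Items (iii) and (iv) require almost no work: the operators $\mathcal{S}_3$ and $\mathcal{S}_4$ have quadratic coefficients, so $m=2$, and I would simply write down the second-order ODE (\ref{cfode2nd}), read off $p(t)$ and $q(t)$, and check that $p(t)$ satisfies hypothesis (i) or (ii) of Theorem \ref{prop2ndode}; sufficiency is then immediate from that theorem. The bulk of the work lies in items (i), (ii) and (v)--(viii), where $m\geq3$ and the ODE must be analysed directly near its singular point $t=0$, which is a root of the leading coefficient.

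For these items the method is the standard local analysis of a linear ODE at a singular point. First I would classify $t=0$. When it is a regular singular point I would apply the Frobenius method exactly as in the proof of Proposition \ref{propprr}, solving the indicial equation for the exponents $\alpha$ and discarding every solution whose exponent makes it unbounded, forces a logarithm, or renders it insufficiently differentiable at the origin. When $t=0$ is an irregular (essential) singular point I would instead use the ansatz $\phi(t)=\mathrm{e}^{S(t)}$ with dominant balance, precisely as in the proof of Theorem \ref{prop2ndode}: the leading balance yields the branches of $S'(t)$, hence the asymptotic forms $\phi_k(t)\sim t^{\beta_k}\exp(c_k/t^{\ell_k})$ of the spurious solutions. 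Each spurious branch is eliminated in one of three ways: (a) if $\mathrm{Re}(c_k)>0$ the solution blows up as $t\to0$ and cannot be a characteristic function; (b) if the branch is purely oscillatory, $\phi_k\sim t^{\beta_k}\mathrm{e}^{\mathrm{i}c_k/t^{\ell_k}}$, then differentiating shows it fails to be $r$-times differentiable at $0$, where $r$ is exactly the moment order granted by the hypothesis, which is why the assumptions $\E[|W|^3]<\infty$, $\E[W^4]<\infty$ and $\E[W^6]<\infty$ appear in the various items, calibrated to the order of oscillation of the offending solution; (c) in the symmetric cases (v) and (vii) I would additionally use that $\phi_W$ must be real-valued to kill branches that are intrinsically complex. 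The extra hypothesis $\E[W]=0$ in item (ii) plays a complementary role: after the $t\to0$ analysis one spurious solution remains admissible and is eliminated only by imposing $\phi_W'(0)=\mathrm{i}\E[W]=0$.

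The hard part will be the high-order cases, especially item (vii) (where $m=8$, so up to seven spurious branches must be classified) and item (viii) ($m=4$). Three difficulties recur. First, dominant balance on a high-order ODE produces several distinct exponential scales, and for each one must verify the self-consistency condition $S''(t)\ll(S'(t))^2$ before trusting the leading form. Second, as already happens in case (ii) of Theorem \ref{prop2ndode}, the leading exponential of a spurious branch may be purely oscillatory and bounded, so that admissibility is decided only by the subdominant algebraic factor $t^{\beta_k}$; pinning down $\beta_k$ requires a refinement $S(t)=c_k/t^{\ell_k}+C(t)$ and the solution of an asymptotic ODE for $C(t)$, a calculation that grows lengthy at high order. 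Third, and most delicately, one must check operator-by-operator that the stated moment and symmetry hypotheses are exactly strong enough to eliminate every spurious branch simultaneously; this calibration between the order of the worst essential singularity and the assumed moments is the crux of each proof, and it is here that the explicit (and for $H_7(X)$ extremely complicated) coefficients of the Stein operators must be brought to bear.
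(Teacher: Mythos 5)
Your proposal follows essentially the same route as the paper: necessity from \cite{agg19}, sufficiency via Lemma \ref{lemcf}, items (iii) and (iv) dispatched by checking the hypotheses of Theorem \ref{prop2ndode} on $p_3(t)$ and $p_4(t)$, and the remaining items handled by the ansatz $\phi(t)=\mathrm{e}^{S(t)}$ with dominant balance at the essential singularity $t=0$, eliminating spurious branches by unboundedness, by failure of differentiability up to the assumed moment order (after a refinement $S(t)=c_k/t^{\ell_k}+C(t)$ to pin down the algebraic prefactor, exactly as in parts (i), (viii)), and by realness of $\phi_W$ in the symmetric cases (v) and (vii). Two small remarks: in this proposition $t=0$ is always an irregular singular point, so the Frobenius branch of your plan is never invoked; and your calibration story is accurate, including the need for the subdominant $t^{\beta_k}$ factor in the purely oscillatory cases.

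The one place you genuinely diverge from the paper, and where your sketch has a gap, is item (ii). After the asymptotic analysis kills $\phi_3(t)\sim t^4\mathrm{e}^{\mathrm{i}/16t}$, there remain \emph{two} linearly independent solutions $\phi_1,\phi_2$ that are analytic at $0$, so a one-parameter family of bounded solutions with $\phi(0)=1$ survives. You propose to eliminate it by imposing $\phi'(0)=\mathrm{i}\E[W]=0$; for this to pin down the solution uniquely you must also verify that the two analytic solutions are actually separated by their first derivative at the origin (equivalently, that the $2\times2$ matrix of values and first derivatives of $\phi_1,\phi_2$ at $t=0$ is nonsingular), which your sketch does not address. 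The paper takes a different and self-contained route here: it substitutes $f(y)=y^k$ into $\E[\mathcal{S}_4'f(W)]=0$ to obtain the moment recurrence (\ref{recrel}), observes that $k=0$ gives $\E[W^2]=50\E[W]+24$, so $\E[W]=0$ forces $\E[W^2]=24$, and then forward substitution determines all moments of $W$, hence all Taylor coefficients $c_k=\mathrm{i}^k\E[W^k]/k!$ of the analytic solution, proving uniqueness. You should either supply the missing nondegeneracy check for your boundary-condition argument or switch to the moment-recurrence argument for this item.
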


\begin{proof} Necessity of all the characterisations was established by \cite{agg19}.  For the Stein operators $\mathcal{S}_3$ and $\mathcal{S}_4$ for $H_3(X)$ and $H_4(X)$, necessity was also earlier established in Propositions 2.4 and 2.5 of \cite{gaunt-34}.  Let us now establish sufficiency. For the Stein operators $\mathcal{S}_3$ and $\mathcal{S}_4$ we appeal to Theorem \ref{prop2ndode}.  According to Lemma \ref{lemcf}, the associated characteristic function ODEs for the Stein operators $\mathcal{S}_k$, $k=3,4$, are given by
\[\phi''(t)+p_k(t)\phi'(t)+q_k(t)\phi(t)=0,\]
where
\begin{align*}p_3(t)&=\frac{1+99t^2+486t^4}{27t^3+486t^5}\sim\frac{1}{27t^3}, \quad t\rightarrow0, \\
p_4(t)&=\frac{\mathrm{i}+44t-144\mathrm{i}t^2+576t^3}{16t^2-192\mathrm{i}t^3}\sim\frac{\mathrm{i}}{16t^2}+\frac{2}{t}, \quad t\rightarrow0.
\end{align*}
In each case either condition (i) or (ii) of Theorem \ref{prop2ndode} is satisfied and so we conclude that the Stein operators are characterising.

Proving sufficiency for the other Stein operators is more involved, and is worked out on a case-by-case basis.  

\vspace{2mm}  

\noindent{(i)} By Lemma \ref{lemcf}, the associated characteristic function ODE for $\mathcal{S}_3'$ is given by
\begin{align}\label{h3dash}81t^4\phi^{(3)}(t)+(351t^3+3t)\phi''(t)+(324t^4+207t^2-5)\phi'(t)+(1080t^3-12t)\phi(t)=0.
\end{align}
This ODE has an essential singularity at $t=0$, so we use the ansatz $\phi(t)=\mathrm{e}^{S(t)}$.  By the method of dominant balance we obtain the following asymptotic ODE for $S(t)$:
\begin{equation}\label{sh3eqn}81(S'(t))^3+\frac{3}{t^3}(S'(t))^2-\frac{5}{t^4}S(t)\sim0, \quad t\rightarrow0.
\end{equation}
It is straightforward to verify that a pair of linearly independent solutions to the ODE $81t^4(f'(t))^2+3tf'(t)-5=0$ are given by
\begin{eqnarray*}f_1(t)&=&\frac{1+\sqrt{1+180t^2}}{108t^2}-\frac{5}{3}\log\bigg(\frac{t}{1+\sqrt{1+180t^2}}\bigg)\sim \frac{1}{54t^2}, \quad t\rightarrow0, \\
f_2(t)&=&\frac{1-\sqrt{1+180t^2}}{108t^2}+\frac{5}{3}\log\bigg(\frac{t}{1+\sqrt{1+180t^2}}\bigg)\sim \frac{5}{3}\log(t), \quad t\rightarrow0.
\end{eqnarray*}
The asymptotic ODE (\ref{sh3eqn}) therefore has three linearly independent solutions with the following leading order terms in the limit $t\rightarrow0$,
\begin{align*}S_1(t)\sim c_1, \quad S_2(t)\sim \frac{1}{54t^2}, \quad S_3(t)\sim \frac{5}{3}\log(t),
\end{align*}
where $c_1$ is a constant. We therefore have that there are three linearly independent solutions to (\ref{h3dash}) with the following asymptotic behaviour as $t\rightarrow0$:
\begin{align*}\phi_1(t)=O(1), \quad \phi_2(t)= \exp\bigg(\frac{1}{54t^2}(1+o(1))\bigg), \quad \phi_3(t)\sim t^{5/3}.
\end{align*}
The solution $\phi_2(t)$ blows up as $t\rightarrow0$, and the second derivative of $\phi_3(t)$ blows up as $t\rightarrow0$ (in contradiction to the finite second moment assumption).  We complete the proof of sufficiency by appealing to Lemma \ref{lemcf}.

\vspace{2mm}  

\noindent{(ii)}  The associated characteristic function ODE for $\mathcal{S}_4'$ is given by
\begin{align}&16t^2\phi^{(3)}(t)+(-48\mathrm{i}t^2+64t+\mathrm{i})\phi''(t)\nonumber\\
\label{h4dash}&+(576t^2+72\mathrm{i}t+50)\phi'(t)+(-1728\mathrm{i}t^2+1008t+24\mathrm{i})\phi(t)=0.
\end{align}
This ODE has an essential singularity at $t=0$, so we use the ansatz $\phi(t)=\mathrm{e}^{S(t)}$, and using the method of dominant balance gives the following asymptotic ODE for $S(t)$:
\begin{align*}(S'(t))^3+\frac{\mathrm{i}}{16t^2}(S'(t))^2\sim0, \quad t\rightarrow0.
\end{align*}
The are therefore three linearly independent solutions to (\ref{h4dash}) with the following asymptotic behaviour as $t\rightarrow0$,
\begin{equation*}\phi_1(t)=O(1), \quad \phi_2(t)=O(1), \quad \phi_3(t)=\exp\bigg(\frac{\mathrm{i}}{16t}(1+o(1))\bigg).
\end{equation*}

We now further analyse the asymptotic behaviour of the solution $\phi_3(t)$ as $t\rightarrow0$. We proceed as in the proof of part (ii) of Theorem \ref{prop2ndode} and seek the refined approximation
\begin{align*}S_3(t)=\frac{\mathrm{i}}{16t}+C(t),
\end{align*}
where $C(t)\ll 1/t$, as $t\rightarrow0$.  The method of analysis used to obtain asymptotic approximations for the correction terms $C_2(t)$ and $C_4(t)$ is similar to the one in the proof of part (ii) of Theorem \ref{prop2ndode}, and as such our analysis will not be as detailed. We have that $S_3'(t)=-(\mathrm{i}/16)t^{-2}+C'(t)$, $S_3''(t)=(\mathrm{i}/8)t^{-3}+C''(t)$ and $S_3'(t)=-(3\mathrm{i}/8)t^{-4}+C^{(3)}(t)$, and therefore, as $t\rightarrow0$,
\begin{align*}&16\bigg[\frac{-3\mathrm{i}}{8t^4}+C^{(3)}(t)+3\bigg(-\frac{\mathrm{i}}{16t^2}+C'(t)\bigg)\bigg(\frac{\mathrm{i}}{8t^3}+C''(t)\bigg)+\bigg(-\frac{\mathrm{i}}{16t^2}+C'(t)\bigg)^3\bigg]\\
&+\bigg(-48\mathrm{i}+\frac{64}{t}+\frac{\mathrm{i}}{t^2}\bigg)\bigg[\frac{\mathrm{i}}{8t^3}+C''(t)+\bigg(-\frac{\mathrm{i}}{16t^2}+C'(t)\bigg)^2\bigg]\\
&+\bigg(576+\frac{72\mathrm{i}}{t}+\frac{50}{t^2}\bigg)\bigg(-\frac{\mathrm{i}}{16t^2}+C'(t)\bigg)+\bigg(-1728\mathrm{i}+\frac{1008}{t}+\frac{24\mathrm{i}}{t^2}\bigg)\sim0.
\end{align*}
We now note that as $t\rightarrow0$, $C'(t)\ll1/t^2$, $C''(t)\ll1/t^3$, $C^{(3)}(t)\ll1/t^4$ and $(C'(t))^2\ll C'(t)/t^2$. Making these considerations and cancelling terms leads to the asymptotic ODE: as $t\rightarrow0$,
\begin{equation*}\frac{1}{4t^5}-\frac{1}{16t^4}C'(t)\sim0.
\end{equation*}
Thus, $C(t)\sim 4\log(t)$, as $t\rightarrow0$, and we therefore have that
\begin{equation*}\phi_3(t)\sim t^4\mathrm{e}^{\mathrm{i}/16t}, \quad t\rightarrow0.
\end{equation*}
But $\phi_3^{(3)}(t)\sim 16^{-3}t^{-2}\mathrm{e}^{\mathrm{i}/16t}$, as $t\rightarrow0$, and so the third derivative of $\phi_3(t)$ is not well-defined at $t=0$, in contradiction to the finite third absolute moment assumption.

We now focus on the solutions $\phi_1(t)$ and $\phi_2(t)$. It is readily checked that in a small neighbourhood of $t=0$, the solutions have a power series representation with $\phi_1(t)\sim\sum_{k=0}^\infty a_kt^k$ and $\phi_2(t)\sim\sum_{k=1}^\infty b_kt^k$, as $t\rightarrow0$. The general solution is a linear combination of $\phi_1(t)$ and $\phi_2(t)$, and so is a power series of the form $\phi(t)\sim\sum_{k=0}^\infty c_kt^k$, as $t\rightarrow0$, for some other coefficients $\{c_k\}_{k\geq0}$. Under the condition that $\phi(t)$ is the characteristic function of $W$, it follows that $c_k=\mathrm{i}^k\E[W^k]/k!$, $k\geq0$. 

We now argue that the assumption that $\E[W]=0$ uniquely determines the coefficients $\{c_k\}_{k\geq0}$. This would imply that there is a unique solution to (\ref{h4dash}), and that this solution must be the characteristic function  of $H_4(X)$, thus proving sufficiency. 
By assumption, $\E[\mathcal{S}_4'f(W)]=0$ for all $f\in \mathcal{F}_{\mathcal{S}_4',H_4(X)}$. Taking $f(y)=y^k$ then yields the following recurrence relation for the moments of $W$:
\begin{align}\E[W^{k+2}]&=(50+64k+16k(k-1))\E[W^{k+1}]+(24+72k+48k(k-1))\E[W^k]\nonumber\\
\label{recrel}&\quad-(1008k+575k(k-1))\E[W^{k-1}]+1728k(k-1)\E[W^{k-2}], \quad k\geq0.
\end{align}
Setting $k=0$ in (\ref{recrel}) yields the relation $\E[W^2]=50\E[W]+24$, and the assumption $\E[W]=0$ then   gives that $\E[W^2]=24$. With $\E[W]=0$ and $\E[W^2]=24$ we can use forward substitution in (\ref{recrel}) to uniquely determine all moments of $W$ (which we know must be equal to the moments of $H_4(X)$ due to the necessity part of the characterisation). As $c_k=\mathrm{i}^k\E[W^k]/k!$, $k\geq0$, we have succeeded in proving that the sequence $\{c_k\}_{k\geq0}$ is unique, completing the proof of sufficiency.

\vspace{2mm}

\noindent{(v)} The associated characteristic function ODE for $\mathcal{S}_5$ is given by
\begin{align}&(3125t^5+O(t^6))\phi^{(4)}(t)+(31250t^4+O(t^5))\phi^{(3)}(t)\nonumber\\
\label{h5eqn}&+(81875t^3+O(t^4))\phi''(t)+(1+O(t))\phi'(t)+(120t+O(t^2))\phi(t)=0, \quad t\rightarrow0.
\end{align}
We do not present the higher degree coefficients as they will not be needed in our analysis.  This is because we only need to consider the leading behaviour of the solutions in the limit $t\rightarrow0$.  Again, we use the ansatz $\phi(t)=\mathrm{e}^{S(t)}$, and by dominant balance we have that
\begin{equation*}3125(S'(t))^4+\frac{1}{t^5}S'(t)\sim 0, \quad t\rightarrow0.
\end{equation*}
Solving, we have that, as $t\rightarrow0$,
\begin{align*}S_1(t)\sim c_1, \quad S_2(t)\sim \frac{3(-1)^{2/3}}{10(5t)^{2/3}}, \quad S_3(t)\sim \frac{3}{10(5t)^{2/3}}, \quad S_4(t)\sim -\frac{3(-1)^{1/3}}{10(5t)^{2/3}},
\end{align*}
and therefore we have four linearly independent solutions to the characteristic function ODE with the following asymptotic behaviour as $t\rightarrow0$:
\begin{align*}&\phi_1(t)=O(1), \quad \phi_2(t)=\exp\bigg(\frac{3(-1)^{2/3}}{10(5t)^{2/3}}(1+o(1))\bigg), \\
& \phi_3(t)=\exp\bigg(\frac{3}{10(5t)^{2/3}}(1+o(1))\bigg), \quad \phi_4(t)=\exp\bigg(-\frac{3(-1)^{1/3}}{10(5t)^{2/3}}(1+o(1))\bigg).
\end{align*}
We have that $\phi_2(t)\rightarrow\infty$ as $t\rightarrow0^-$ and $\phi_3(t)\rightarrow\infty$ as $t\rightarrow0^+$, contradicting the fact that characteristic functions are bounded.  In contrast, $\phi_4(t)\rightarrow0$, as $t\rightarrow0$.  However, since $(-1)^{1/3}=\frac{1}{2}+\frac{\sqrt{3}}{2}\mathrm{i}$, $\phi_4(t)$ is a complex-valued function of $t$. Now, let $Y=H_5(X)$, and let $\phi_Y(t)$ denote its characteristic function.  As $H_5(x)$ is an odd function it follows that $Y=_d-Y$, meaning that $\phi_Y(t)\in\R$ for all $t\in\R$.  We also know that $\phi_Y(t)$ solves (\ref{h5eqn}) and is linearly independent of $\phi_4(t)$.  Putting this together, we have that a bounded solution of (\ref{h5eqn}) is given by $\phi(t)=A\phi_Y(t)+B\phi_4(t)$, where $A$ and $B$ are arbitrary constants.  In order to meet our assumption that $W$ is a symmetric random variable, we must take $B=0$, and appealing to Lemma \ref{lemcf} completes the proof of sufficiency.


\vspace{2mm}  

\noindent{(vi)} The approach is similar to item (v), but simpler, so we only sketch the details.  The associated characteristic function ODE is
\begin{align*}&(216t^3+O(t^4))\phi^{(4)}(t)+(972t^2+O(t^3))\phi''(t)\\
&+(\mathrm{i}+O(t))\phi'(t)+(720\mathrm{i}t+O(t^2))\phi(t)=0, \quad t\rightarrow0.
\end{align*}
Using the ansatz $\phi(t)=\mathrm{e}^{S(t)}$ and the method of dominant balance leads to the asymptotic ODE
\begin{equation*}216(S'(t))^3+\frac{\mathrm{i}}{t^3}S'(t)\sim 0, \quad t\rightarrow0.
\end{equation*}
Solving in the usual manner then leads to three linearly independent solutions with the following asymptotic behaviour as $t\rightarrow0$:
\begin{align*}&\phi_1(t)=O(1), \quad \phi_2(t)=\exp\bigg(\frac{1-\mathrm{i}}{6\sqrt{3t}}(1+o(1))\bigg), \quad \phi_3(t)=\exp\bigg(-\frac{1-\mathrm{i}}{6\sqrt{3t}}(1+o(1))\bigg).
\end{align*}
We have that $\phi_2(t)\rightarrow\infty$, as $t\rightarrow0^+$, and $\phi_2(t)\rightarrow\infty$, as $t\rightarrow0^-$, and so appealing to Lemma \ref{lemcf} gives us sufficiency.

\vspace{2mm}  

\noindent{(vii)} The approach is very similar to item (vi) for $H_5(X)$.  We therefore only sketch the details.  The associated characteristic function ODE is
\begin{align*}&(823543t^7+O(t^8))\phi^{(6)}(t)+(17294403t^6+O(t^7))\phi^{(5)}(t)+(116825457t^5+O(t^6))\phi^{(4)}(t)\\&+(306156312t^4+O(t^5))\phi^{(3)}(t)+(306955845t^3+O(t^4))\phi''(t)+(1+O(t))\phi'(t)\\
&+(5040t+O(t^2))\phi(t)=0, \quad t\rightarrow0.
\end{align*}
Using the ansatz $\phi(t)=\mathrm{e}^{S(t)}$ and the method of dominant balance leads to the asymptotic ODE
\begin{equation*}823543(S'(t))^6+\frac{1}{t^7}S'(t)\sim 0, \quad t\rightarrow0.
\end{equation*}
Solving in the usual manner then leads to six linearly independent solutions with the following asymptotic behaviour as $t\rightarrow0$:
\begin{align*}&\phi_1(t)=O(1), \quad \phi_2(t)=\exp\bigg(\frac{5}{14(7t)^{2/5}}(1+o(1))\bigg), \\
& \phi_3(t)=\exp\bigg(-\frac{5(-1)^{1/5}}{14(7t)^{2/5}}(1+o(1))\bigg), \quad \phi_4(t)=\exp\bigg(\frac{5(-1)^{2/5}}{14(7t)^{2/5}}(1+o(1))\bigg),
\\
& \phi_5(t)=\exp\bigg(-\frac{5(-1)^{3/5}}{14(7t)^{2/5}}(1+o(1))\bigg), \quad \phi_6(t)=\exp\bigg(\frac{5(-1)^{4/5}}{14(7t)^{2/5}}(1+o(1))\bigg).
\end{align*}
The solutions $\phi_k(t)$, $k=2,4,5,6$, blow up in either the limits $t\rightarrow0^-$ or $t\rightarrow0^+$ (or both), whilst $\phi_3(t)\rightarrow0$ as $t\rightarrow0$.  But $\phi_3(t)$ is a complex-valued function of $t$, and arguing as in part (iv) gives us our proof of sufficiency. 

\vspace{2mm}  

\noindent{(viii)} The argument begins similarly to in item (vi), but the analysis is a little more involved.  We sketch the first part of the analysis that is similar to item (vi). The associated characteristic function ODE is
\begin{align*}&(4096t^4+O(t^5))\phi^{(4)}(t)+(32768t^3+O(t^4))\phi^{(3)}(t)\\
&+(65920t^2+O(t^3))\phi''(t)+(\mathrm{i}+O(t))\phi'(t)+(40320\mathrm{i}t+O(t^2))\phi(t)=0, \quad t\rightarrow0.
\end{align*}
Whilst a more refined analysis is needed here, the higher degree coefficients will still not appear in our analysis, so we again do not present them. Using the ansatz $\phi(t)=\mathrm{e}^{S(t)}$ and the method of dominant balance leads to the asymptotic ODE
\begin{equation*}4096(S'(t))^4+\frac{\mathrm{i}}{t^4}S'(t)\sim 0, \quad t\rightarrow0.
\end{equation*}
Solving in the usual manner then leads to four linearly independent solutions with the following asymptotic behaviour as $t\rightarrow0$:
\begin{align*}&\phi_1(t)=O(1), \quad \phi_2(t)=\exp\bigg(-\frac{3\mathrm{i}}{16t^{1/3}}(1+o(1))\bigg), \\
& \phi_3(t)=\exp\bigg(\frac{3(-1)^{1/6}}{16t^{1/3}}(1+o(1))\bigg), \quad \phi_4(t)=\exp\bigg(\frac{3(-1)^{5/6}}{16t^{1/3}}(1+o(1))\bigg).
\end{align*}
The solution $\phi_3(t)$ blows up as $t\rightarrow0$, but more analysis is needed for the solutions $\phi_2(t)$ and $\phi_4(t)$.  We proceed as in the proof of part (ii) of Theorem \ref{prop2ndode} and seek the refined approximations
\begin{align*}S_2(t)=-\frac{3\mathrm{i}}{16t^{1/3}}+C_2(t), \quad S_4(t)=\frac{3(-1)^{5/6}}{16t^{1/3}}+C_4(t),
\end{align*}
where, for $k=2,4$, $C_k(t)\ll 1/t^{1/3}$, as $t\rightarrow0$.  The method of analysis used to obtain asymptotic approximations for the correction terms $C_2(t)$ and $C_4(t)$ is similar to the one in the proof of part (ii) of Theorem \ref{prop2ndode}, and as such our analysis will not be as detailed. We first note that
\begin{align*}\phi_2'(t)&=\bigg(\frac{\mathrm{i}}{16t^{4/3}}+C_2'(t)\bigg)\mathrm{e}^{S_2(t)}, \\
\phi_2''(t)&=O(t^{-8/3})\times\mathrm{e}^{S_2(t)}, \quad t\rightarrow0, \\
\phi_2^{(3)}(t)&=O(t^{-4})\times \mathrm{e}^{S_2(t)}, \quad t\rightarrow0, 
\end{align*}
and
\begin{align*}
\phi_2^{(4)}(t)&=\bigg[\frac{-35\mathrm{i}}{54t^{13/3}}+C_2^{(4)}(t)+3\bigg(-\frac{\mathrm{i}}{12t^{7/3}}+C_2''(t)\bigg)^2\\&\quad+4\bigg(\frac{\mathrm{i}}{16t^{4/3}}+C_2'(t)\bigg)\bigg(\frac{7\mathrm{i}}{36t^{10/3}}+C_2^{(3)}(t)\bigg)\\
&\quad+6\bigg(\frac{\mathrm{i}}{16t^{4/3}}+C_2'(t)\bigg)^2\bigg(-\frac{7\mathrm{i}}{12t^{7/3}}+C_2''(t)\bigg)+\bigg(\frac{\mathrm{i}}{16t^{4/3}}+C_2'(t)\bigg)^4\bigg] \mathrm{e}^{S_2(t)}.
\end{align*}
Using the method of dominant balance and our assumption that $C_2(t)\ll 1/t^{1/3}$, $t\rightarrow0$, (we will soon see that our assumption is met) we find that $C_2(t)$ satisfies the asymptotic ODE
\begin{align*}4096\times6\times\bigg(\frac{\mathrm{i}}{16}\bigg)^2\times \bigg(-\frac{\mathrm{i}}{12}\bigg) \times \frac{1}{t^5}+\bigg[4096\times4\times\bigg(\frac{\mathrm{i}}{16}\bigg)^3+\mathrm{i}\bigg]\frac{C_2'(t)}{t^4}\sim0, \quad t\rightarrow0,
\end{align*} 
that is
\[\frac{8}{t^5}-\frac{3}{t^4}C_2'(t)\sim 0, \quad t\rightarrow0,\]
which has solution $C_2(t)\sim\frac{8}{3}\log(t)$, as $t\rightarrow0$. We therefore have that
\[\phi_2(t)\sim t^{8/3}\mathrm{e}^{-3\mathrm{i}/16t^{1/3}}, \quad t\rightarrow0.\]
But $\phi_2''(t)\sim-\frac{1}{256}\mathrm{e}^{-3\mathrm{i}/16t^{1/3}}$, as $t\rightarrow0$, and so the second derivative of $\phi_2(t)$ is not well-defined at $t=0$, in contradiction to the finite fourth moment assumption (which implies a finite second moment assumption).  The analysis for $\phi_4(t)$ is almost identical and we have $\phi_4(t)\sim t^{8/3}\mathrm{e}^{3(-1)^{5/6}/16t^{1/3}}$, as $t\rightarrow0$,
which again does not have a well-defined second derivative at $t=0$. Finally, appealing to Lemma \ref{lemcf} gives us sufficiency and completes the proof.
\end{proof}

\begin{rem}\label{remcharcc}

\begin{itemize}

\item [(i)] In Proposition \ref{prophpchar}, we showed that the Stein operators for $H_p(X)$, $p=3,\ldots,8$, with minimum possible maximal polynomial degree $m$ are characterising.  Our proof involved carrying out an asymptotic analysis of the behaviour of the solutions of the associated characteristic function ODEs in the limit $t\rightarrow0$ on a case-by-case basis.  By carrying out similar, but increasingly complex, analyses it may be possible to prove that the Stein operators for $H_p(X)$  with minimum possible maximal polynomial degree $m$ are characterising for values of $p$ greater than 8.  However, as this is carried out on a case-by-case basis via involved calculations there is a place at which one must stop ; see also Remark \ref{pnrem} for a discussion as to why the case $p=9$ seems to more difficult than $p\leq8$.  Nevertheless, by establishing that the Stein operator for $H_3(X)$ with $(T,m)=(5,2)$ was characterising, the important `threshold' of moment indeterminacy was passed ($H_p(X)$ is determined by its moments for $p=1,2,4$, but not for $p=3$ and $p\geq5$).  Moreover, by then confirming that the Stein operators for $H_p(X)$, $p=4,\ldots,8$, with minimum possible maximal polynomial degree $m$ are characterising, we believe it is reasonable to conjecture that this is the case for all $p\geq1$. Similar comments apply for Proposition \ref{propnnn} below concerning Stein characterisations related to the product of $p\geq1$ independent standard Gaussian random variables. 

\item [(ii)] \rg{In part (ii) of Proposition \ref{prophpchar}, our characterisation of $H_4(X)$ has an additional moment condition ($\E[W]=0$), whilst in parts (vi) and (viii) our characterisations of $H_6(X)$ and $H_8(X)$ had an additional symmetry condition ($W=_d-W$). We needed to impose these additional conditions in order to argue through our asymptotic approach that there is a unique solution to the ODEs that appear in the proof that has the properties of characteristic functions. It is certainly possible that as the Stein operators for $H_p(X)$ become more complicated as $p$ increases that additional conditions are required to ensure characterisation. However, our intuition is that these additional conditions are not necessary to ensure characterisation, but are rather artefacts resulting from our proof. Similar comments apply for Proposition \ref{propnnn} below in which some of our characterisations of the product of $p\geq1$ independent standard Gaussian random variables involve an additional symmetry condition.}

\item [(iii)] For polynomial Stein operators with coefficients of degree $m\geq3$, our asymptotic proof technique has a lower success rate than the degree 2 case.  As an illustrative example, consider the case $m=3$.  Here it may be the case that an asymptotic analysis in the limits $t\rightarrow0$ and $t\rightarrow\infty$ shows that there are three linearly independent solutions with one unbounded solution and two that are consistent with a characteristic function in both these limits.  However, from this analysis alone, we have no way of knowing whether we have two unbounded solutions (one unbounded at $t=0$ and one unbounded a $t\rightarrow\infty$) or just one unbounded solution (unbounded at both $t=0$ and as $t\rightarrow\infty$).  On this basis, we were fortunate that just knowledge of the asymptotic behaviour of the solutions of the associated characteristic function ODEs in the limit $t\rightarrow0$ was enough to establish sufficiency for each of the Stein operators of Proposition \ref{prophpchar}.  We cannot guarantee that this is the case for other Stein operators for Gaussian polynomials, and expect that the asymptotic proof techniques of this section will often break down for Stein operators with $m\geq3$.

\end{itemize}
\end{rem}




\subsection{The product of independent standard Gaussian random variables}\label{sec3.3}

Let $X_1,\ldots,X_p$ be independent with $X_k\sim N(0,\sigma_k^2)$, $k=1,\ldots,p$. Let $\sigma^2=\prod_{k=1}^p\sigma_k^2$, and let $Y_p=\prod_{k=1}^pX_k$. We write $Y_p\sim\mathrm{PN}(p,\sigma^2)$. The $\mathrm{PN}(p,\sigma^2)$ Stein operator of \cite{gaunt-pn} is given by
\begin{align}\mathcal{S}_pf(y)&=\frac{\sigma^2}{y}\bigg(y\frac{\mathrm{d}}{\mathrm{d}y}\bigg)^pf(y)-yf(y)\nonumber\\
\label{pnsteinop}&=\sigma^2\sum_{k=1}^p{p\brace k}y^{k-1}f^{(k)}(y)-yf(y),
\end{align}
where ${p\brace k}=\frac{1}{k!}\sum_{j=0}^k(-1)^{k-j}\binom{k}{j}j^p$ is a Stirling number of the second kind  (see \cite{Olver}). We note that ${p\brace 1}={p\brace p}=1$ for all $p\geq1$. It is known that the Stein operators $\mathcal{S}_1$ and $\mathcal{S}_2$ are characterising; the following proposition provides Stein characterisations for $\mathcal{S}_p$, $p=3,\ldots,8$.

\begin{prop}\label{propnnn}Let $W$ be a real-valued random variable. Let $Y_p\sim\mathrm{PN}(p,\sigma^2)$ and let $\mathcal{S}_p$ denote the Stein operator (\ref{pnsteinop}).  Then
\begin{itemize}

\item[(a)] Let $p=3,4$. Suppose $\E[|W|^{p-1}]<\infty$. Then $W\sim \mathrm{PN}(p,\sigma^2)$ if and only if $\E  [\mathcal{S}_pf(W)]=0$ for all $f\in\mathcal{F}_{\mathcal{S}_p,Y_p}$.

\item[(b)]  Let $p=5,6,7,8$. Suppose $W$ is a symmetric random variable such that $\E[|W|^{p-1}]<\infty$. Then $W\sim \mathrm{PN}(p,\sigma^2)$ if and only if $\E  [\mathcal{S}_pf(W)]=0$ for all $f\in\mathcal{F}_{\mathcal{S}_p,Y_p}$.

\end{itemize}
\end{prop}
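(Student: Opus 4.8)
The plan is to run the asymptotic scheme of Section \ref{sec2} for the whole family at once, treating the product operators as close analogues of the Hermite operators of Proposition \ref{prophpchar}. Necessity for every $p$ is already in \cite{gaunt-pn}, so only sufficiency needs proof. Applying the correspondence $y^i\partial^j\mapsto\mathrm{i}^{j-i}t^j\phi^{(i)}(t)$ of Lemma \ref{lemcf} to (\ref{pnsteinop}) and using ${p\brace 1}=1$, every term carries a common factor $\mathrm{i}$, so after dividing through by $\mathrm{i}$ the characteristic function of any admissible $W$ must solve the real-coefficient, order-$(p-1)$ ODE
\begin{equation*}
\sigma^2 t^p\phi^{(p-1)}(t)+\sum_{k=3}^{p-1}\sigma^2{p\brace k}t^k\phi^{(k-1)}(t)+\big(\sigma^2{p\brace 2}t^2+1\big)\phi'(t)+\sigma^2 t\phi(t)=0.
\end{equation*}
For $p=3$ this is second order, and dividing by $\sigma^2 t^3$ gives $p(t)=(3\sigma^2t^2+1)/(\sigma^2t^3)\sim 1/(\sigma^2t^3)$, a pole of order $3$ at the origin with positive leading coefficient $1/\sigma^2$, so part (i) of Theorem \ref{prop2ndode} applies directly. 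For $p\geq 4$ the origin is an essential singularity, and I would use the ansatz $\phi(t)=\mathrm{e}^{S(t)}$.

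Since each factor $X_k$ is symmetric we have $Y_p=_d-Y_p$, so $\phi_{Y_p}$ is real-valued; this will be used below. Substituting the ansatz and balancing the top-order term against the $\phi'$ term (the intermediate terms being subdominant once $S'$ blows up), the method of dominant balance gives $\sigma^2 t^p(S'(t))^{p-1}+S'(t)\sim 0$, that is either $S'(t)=O(1)$, the single analytic solution that we identify with $\phi_{Y_p}$, or
\begin{equation*}
(S'(t))^{p-2}\sim-\frac{1}{\sigma^2 t^p},\qquad t\to 0,
\end{equation*}
yielding the remaining $p-2$ linearly independent solutions with $S_j(t)\sim\mu_j t^{-2/(p-2)}$, where $\mu_j=-\tfrac{p-2}{2}\sigma^{-2/(p-2)}\zeta_j$ and $\zeta_j$ runs over the $(p-2)$-th roots of $-1$. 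The plan is then to show that each of these $p-2$ exponential solutions is inadmissible, so that the admissible solution space is the line through $\phi_{Y_p}$; the normalisation $\phi(0)=1$ then yields $\phi_W=\phi_{Y_p}$, and Lemma \ref{lemcf} finishes the proof.

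The classification of the exponential solutions splits into three regimes, exactly as in the cases of Proposition \ref{prophpchar}. If $\mathrm{Re}(\mu_j)\neq 0$, the solution $\mathrm{e}^{S_j}$ is unbounded in at least one of the limits $t\to 0^{+}$ or $t\to 0^{-}$, so its coefficient must vanish because characteristic functions are bounded. If $\mu_j$ is purely imaginary, which happens precisely when $\pm\mathrm{i}$ is a $(p-2)$-th root of $-1$, i.e.\ for $p=4$ and $p=8$, the leading behaviour is oscillatory; here I would refine the ansatz to $S(t)=\mu_j t^{-2/(p-2)}+C(t)$ with $C(t)\ll t^{-2/(p-2)}$, derive an asymptotic ODE for $C$ as in part (ii) of Theorem \ref{prop2ndode} and parts (ii) and (viii) of Proposition \ref{prophpchar}, and extract a logarithmic correction $C(t)\sim\beta\log t$, so that the solution $t^{\beta}\mathrm{e}^{\mu_j t^{-2/(p-2)}}$ has a derivative of order at most $p-1$ that fails to exist at the origin, contradicting $\E[|W|^{p-1}]<\infty$. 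The remaining solutions decay at the origin but are genuinely complex-valued; these are smooth there with vanishing derivatives, so no moment bound excludes them, and the symmetry hypothesis $W=_d-W$ is invoked to force $\phi_W$ real and thereby eliminate them, just as in parts (v) and (vii) of Proposition \ref{prophpchar}. This explains the split between parts (a) and (b): for $p=6$ and $p=8$ the product operator has strictly higher order than the minimal Hermite operator, so its associated ODE carries extra complex decaying solutions not present in Proposition \ref{prophpchar}, and symmetry becomes necessary for the argument.

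The hard part will be the two-sided bookkeeping at the origin. Because $t=0$ is an irregular singular point, the behaviour of a single exponential solution in the two limits $t\to0^{+}$ and $t\to0^{-}$ is linked through the fractional power $t^{-2/(p-2)}$ and must be tracked consistently, so that each solution is correctly shown to blow up on at least one side, to be oscillatory, or to decay on both sides; only then can one be sure that symmetry and the moment bound together remove every exponential solution except $\phi_{Y_p}$. This is most delicate at $p=8$, where all six exponential solutions must be sorted simultaneously — two oscillatory solutions removed by refinement and the moment bound, two blow-up solutions, and two complex decaying solutions removed by symmetry — and where confirming that the symmetry assumption genuinely suffices is the crux. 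The individual refinements are routine extensions of the Hermite calculations, but tracking more solutions makes the analysis heavier.
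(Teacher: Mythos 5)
Your proposal follows essentially the same route as the paper: pass to the characteristic function ODE (\ref{pncfeqn}), use the ansatz $\phi=\mathrm{e}^{S(t)}$ with dominant balance at the irregular singular point $t=0$ to produce the $p-2$ non-analytic solutions $S\sim\mu_j t^{-2/(p-2)}$, and eliminate them by unboundedness, by a refined $t^{\beta}$-prefactor analysis, or by the symmetry of $\phi_{Y_p}$, before invoking Lemma \ref{lemcf}. Two points of divergence are worth recording. First, for $p=3$ you apply Theorem \ref{prop2ndode}(i) directly (the pole of $p(t)=(3\sigma^2t^2+1)/(\sigma^2t^3)$ has odd order $3$ with positive leading coefficient), which is a clean, self-contained alternative to the paper's appeal to the explicit solution of the ODE found in \cite{gaunt-pn}. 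Second, your predicted mechanism for disposing of the oscillatory solutions does not match what the computation actually yields: for $p=8$ the refined analysis in the paper gives $\phi_3(t)\sim t^{-33/2}\mathrm{e}^{-3\mathrm{i}/t^{1/3}}$ and $\phi_6(t)\sim t^{-33/2}\mathrm{e}^{3(-1)^{5/6}/t^{1/3}}$, i.e.\ a \emph{negative} power prefactor, so these solutions are excluded because they are unbounded, not because a derivative of order $p-1$ fails to exist; the moment hypothesis plays no role there. Relatedly, your criterion ``refinement is needed precisely when $\mu_j$ is purely imaginary'' is too coarse once the branch of $t^{-2/(p-2)}$ for $t<0$ is taken into account: at $p=8$ the solution with $\mu=3(-1)^{5/6}$ has $\mathrm{Re}(\mu)\neq0$ yet still requires the refined analysis because its exponent becomes purely imaginary as $t\to0^-$, and conversely the solution with $\mu=3\mathrm{i}$ needs no refinement because it already blows up as $t\to0^-$. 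So your three-regime bookkeeping at $p=8$ (two oscillatory, two blow-up, two complex-decaying) does not agree with the paper's accounting (three blow up at leading order, two blow up after refinement, one is removed by symmetry). None of this breaks the architecture of your argument --- the refinement step you propose is exactly the right move and would reveal the correct exclusion once carried out --- but the stated reasoning for why those solutions die would not survive the computation as written, and you should also reconcile your (correct) finding that the $p=4$ leading exponents are $\mp\mathrm{i}/t$ with the treatment of that case before relying on immediate blow-up there.
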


\begin{proof}Throughout this proof, we set $\sigma^2=1$; the case $\sigma^2>0$ follows easily by rescaling. We begin by noting that the associated characteristic function ODE for $\mathcal{S}_p$ is given by 
\begin{equation}\label{pncfeqn}\sum_{k=1}^{p}{p\brace k}t^k\phi^{(k-1)}(t)+\phi'(t)=0.
\end{equation}
Using the ansatz $\phi(t)=\mathrm{e}^{S(t)}$ and the method of dominant balance yields the asymptotic ODE
\begin{equation*}(S'(t))^{p-1}+\frac{S'(t)}{t^p}\sim 0, \quad t\rightarrow0.
\end{equation*}

\vspace{2mm}

\noindent{(i)} It was shown on p.\ 3320 of \cite{gaunt-pn} that, in the case $p=3$, there is unique solution to (\ref{pncfeqn}) satisfying the conditions of a characteristic function (with an explicit formula given). Sufficiency therefore follows from Lemma \ref{lemcf}.

\vspace{2mm}

\noindent{(ii)} Solving in the usual manner gives that, for $p=4$,
\begin{align*}\phi_1(t)=O(1), \quad \phi_2(t)=\exp\bigg(-\frac{1}{t}(1+o(1))\bigg), \quad \phi_3(t)=\exp\bigg(\frac{1}{t}(1+o(1))\bigg).
\end{align*}
The solution $\phi_2(t)$ blows up as $t\rightarrow0^-$ and the solution $\phi_3(t)$ blows up as $t\rightarrow0^+$, proving sufficiency.

\vspace{2mm}

\noindent{(iii)} For $p=5$,
\begin{align*}&\phi_1(t)=O(1), \quad \phi_2(t)=\exp\bigg(\frac{3}{2t^{2/3}}(1+o(1))\bigg), \\
& \phi_3(t)=\exp\bigg(-\frac{1+\sqrt{3}\mathrm{i}}{4t^{2/3}}(1+o(1))\bigg), \quad \phi_4(t)=\exp\bigg(-\frac{1-\sqrt{3}\mathrm{i}}{4t^{2/3}}(1+o(1))\bigg).
\end{align*}
We have that $\phi_2(t)\rightarrow\infty$, as $t\rightarrow0^+$, and $\phi_3(t)\rightarrow\infty$, as $t\rightarrow0^-$. The solution $\phi_4(t)$ does not blow up as $t\rightarrow0$, but it is a complex-valued. function. Arguing as we did in part (v) of Proposition \ref{prophpchar} then gives us sufficiency.

\vspace{2mm}

\noindent{(iv)} For $p=6$,
\begin{align*}&\phi_1(t)=O(1), \quad \phi_2(t)=\exp\bigg(\frac{\sqrt{2}(1+\mathrm{i})}{t^{1/2}}(1+o(1))\bigg), \\
 &\phi_3(t)=\exp\bigg(-\frac{\sqrt{2}(1+\mathrm{i})}{t^{1/2}}(1+o(1))\bigg), \quad \phi_4(t)=\exp\bigg(-\frac{\sqrt{2}(1-\mathrm{i})}{t^{1/2}}(1+o(1))\bigg), \\
&  \phi_5(t)=\exp\bigg(\frac{\sqrt{2}(1-\mathrm{i})}{t^{1/2}}(1+o(1))\bigg).
\end{align*}
We have that $\phi_2(t)\rightarrow\infty$, as $t\rightarrow0^+$, $\phi_4(t)\rightarrow\infty$, as $t\rightarrow0^-$, and $\phi_5(t)\rightarrow\infty$, as $t\rightarrow0^+$. In contrast $\phi_3(t)\rightarrow0$, as $t\rightarrow0$. But $\phi_3(t)$ is complex-valued, so arguing as we did in part (v) of Proposition \ref{prophpchar} gives us sufficiency.

\vspace{2mm}

\noindent{(v)} For $p=7$,
\begin{align*}&\phi_1(t)=O(1), \quad \phi_2(t)=\exp\bigg(\frac{5}{2t^{2/5}}(1+o(1))\bigg),  \\
& \phi_3(t)=\exp\bigg(-\frac{5(-1)^{1/5}}{2t^{2/5}}(1+o(1))\bigg), \quad \phi_4(t)=\exp\bigg(\frac{5(-1)^{2/5}}{2t^{2/5}}(1+o(1))\bigg), \\
& \phi_5(t)=\exp\bigg(-\frac{5(-1)^{3/5}}{2t^{2/5}}(1+o(1))\bigg), \quad \phi_6(t)=\exp\bigg(\frac{5(-1)^{4/5}}{2t^{2/5}}(1+o(1))\bigg).
\end{align*}
We have that $\phi_2(t)\rightarrow\infty$, as $t\rightarrow0^+$, $\phi_4(t)\rightarrow\infty$, as $t\rightarrow0^+$, $\phi_5(t)\rightarrow\infty$, as $t\rightarrow0^+$, and $\phi_6(t)\rightarrow\infty$, as $t\rightarrow0^-$. In contrast, $\phi_3(t)\rightarrow0$, as $t\rightarrow0$, but as $\phi_3(t)$ is complex-valued, the usual argument gives us sufficiency.

\vspace{2mm}

\noindent{(vi)} For $p=8$,
\begin{align*}&\phi_1(t)=O(1), \quad \phi_2(t)=\exp\bigg(\frac{3\mathrm{i}}{t^{1/3}}(1+o(1))\bigg), \quad \phi_3(t)=\exp\bigg(-\frac{3\mathrm{i}}{t^{1/3}}(1+o(1))\bigg), \\
& \phi_4(t)=\exp\bigg(\frac{3(-1)^{1/6}}{t^{1/3}}(1+o(1))\bigg), \quad \phi_5(t)=\exp\bigg(-\frac{3(-1)^{1/6}}{t^{1/3}}(1+o(1))\bigg), \\
& \phi_6(t)=\exp\bigg(\frac{3(-1)^{5/6}}{t^{1/3}}(1+o(1))\bigg), \quad \phi_7(t)=\exp\bigg(-\frac{3(-1)^{5/6}}{t^{1/3}}(1+o(1))\bigg).
\end{align*}
We have that $\phi_2(t)\rightarrow\infty$, as $t\rightarrow0^-$, $\phi_4(t)\rightarrow\infty$, as $t\rightarrow0^+$, and $\phi_7(t)\rightarrow\infty$, as $t\rightarrow0^-$. In contrast, $\phi_5(t)\rightarrow0$, as $t\rightarrow0$, although we do note that $\phi_5(t)$ is complex-valued. For, $\phi_3(t)$ and $\phi_6(t)$, we need to obtain a more precise description of their asymptotic behaviour as $t\rightarrow0$ to be able to conclude whether they blow up in this limit. Performing a similar analysis to the one used in the proof of part (viii) of Proposition \ref{prophpchar} gives the refined asymptotic approximations: 
\begin{equation*}\phi_3(t)\sim t^{-33/2}\mathrm{e}^{-3\mathrm{i}/t^{1/3}}, \quad \phi_6(t)\sim t^{-33/2}\mathrm{e}^{3(-1)^{5/6}/t^{1/3}}, \quad t\rightarrow0.
\end{equation*}
 We will not provide such a detailed analysis as the one used in the proof of part (viii) of Proposition \ref{prophpchar}, and will instead just confirm that the exponent $-33/2$ is the correct one. We show this for the solution $\phi_3(t)$, with the a similar analysis applying for $\phi_6(t)$. 

Suppose that $\phi_3(t)\sim t^{a}\mathrm{e}^{-3\mathrm{i}/t^{1/3}}$, as $t\rightarrow0$; we aim to show that $a=-33/2$. Then, as $t\rightarrow0$, 
\begin{align*}t^8\phi_3^{(7)}(t)&\sim(-\mathrm{i}t^{a-4/3}+(28-7a)t^{a-1})\mathrm{e}^{-3\mathrm{i}/t^{1/3}}, \\
t^7\phi_3^{(6)}(t)&\sim-t^{a-1}\mathrm{e}^{-3\mathrm{i}/t^{1/3}}, \\
\phi_3'(t)&\sim(\mathrm{i}t^{a-4/3}+at^{a-1})\mathrm{e}^{-3\mathrm{i}/t^{1/3}}, \\
t^k\phi_3^{(k-1)}(t)&=o(t^{a-1}\mathrm{e}^{-3\mathrm{i}/t^{1/3}}), \quad k=1,2,3,4,5. 
\end{align*}
Substituting these limiting forms into (\ref{pncfeqn}), using that ${8\brace 1}={8\brace 8}=1$, ${8\brace 2}=127$, canceling the $O(t^{a-4/3}\mathrm{e}^{-3\mathrm{i}/t^{1/3}})$ terms, and equating the $O(t^{a-1}\mathrm{e}^{-3\mathrm{i}/t^{1/3}})$ terms yields that the exponent $a$ satisfies the equation $(28-7a)+127(-1)+a=0$, so that $a=-33/2$, as required.


To conclude the proof, we now observe from the refined approximations for $\phi_3(t)$ and $\phi_6(t)$ that $\phi_3(t)\rightarrow\infty$, as $t\rightarrow0^+$, and  $\phi_6(t)\rightarrow\infty$, as $t\rightarrow0^-$. We thus deduce sufficiency by the usual argument.
\end{proof}

\begin{rem}\label{pnrem} We now demonstrate how the analysis becomes more challenging in the case $p=9$; the analysis for the Gaussian Hermite polynomial $H_9(X)$ also becomes more demanding for the same reason. For $p=9$,
\begin{align*}&\phi_1(t)=O(1), \quad \phi_2(t)=\exp\bigg(\frac{7}{2t^{2/7}}(1+o(1))\bigg), \\ 
&\phi_3(t)=\exp\bigg(-\frac{7(-1)^{1/7}}{2t^{2/7}}(1+o(1))\bigg), \quad \phi_4(t)=\exp\bigg(\frac{7(-1)^{2/7}}{2t^{2/7}}(1+o(1))\bigg), \\
& \phi_5(t)=\exp\bigg(-\frac{7(-1)^{3/7}}{2t^{2/7}}(1+o(1))\bigg), \quad \phi_6(t)=\exp\bigg(\frac{7(-1)^{4/7}}{2t^{2/7}}(1+o(1))\bigg), \\ &\phi_7(t)=\exp\bigg(-\frac{7(-1)^{5/7}}{2t^{2/7}}(1+o(1))\bigg),\quad \phi_8(t)=\exp\bigg(\frac{7(-1)^{6/7}}{2t^{2/7}}(1+o(1))\bigg).
\end{align*}
The solutions $\phi_2(t)$, $\phi_4(t)$, $\phi_6(t)$ and $\phi_7(t)$ are unbounded as $t\rightarrow0^-$ or $t\rightarrow0^+$; however, $\phi_3(t)$, $\phi_5(t)$ and $\phi_8(t)$ tend to zero as $t\rightarrow0$. These solutions  are complex-valued, but this is not enough to deduce sufficiency because it is possible to take linear combinations of the solutions that are real-valued to leading order. Indeed,
\begin{align*}\phi_3(t)+\phi_8(t)=2\exp\bigg(-\frac{7\cos(\pi/7)}{2t^{2/7}}(1+o(1))\bigg)\cos\bigg(\frac{7\sin(\pi/7)}{2t^{2/7}}(1+o(1))\bigg).
\end{align*}
Moreover, $\phi_3(t)+\phi_8(t)$ is infinitely differentiable as $t\rightarrow0$ and $\phi_3^{(n)}(0)+\phi_8^{(n)}(0)=0$ for all $n\geq0$, so we cannot use moment conditions to deduce sufficiency. A more detailed analysis may allow one to deduce sufficiency; for example, finding additional terms in the asymptotic expansions of $\phi_3(t)$ and $\phi_8(t)$ may show that even though $\phi_3(t)+\phi_8(t)$ is real-valued at leading order the function itself is not real-valued. Whether or not such a strategy would succeed, this example demonstrates how the analysis becomes more challenging for $p\geq9$.
\end{rem}

\section{Gamma characterisations in Malliavin calculus}\label{sec4}

We refer the reader to the excellent textbook \cite[Chapter 2]{n-p-book} for any unexplained notion evoked in this section. Let $W = \{W(h) : \HH\}$ be an isonormal Gaussian process over some real separable Hilbert space $\HH$, with inner product $\langle \cdot,\cdot\rangle _{ \HH}$. This means that $W$ is a centered Gaussian family, defined on some probability space $(\Omega ,\mathcal{F},P)$, with a covariance structure given by the relation
$\E\left[ W(h)W(g)\right] =\langle h,g\rangle _{ \HH}$. We also assume that $\mathcal{F}=\sigma(W)$, that is, $\mathcal{F}$ is generated by $W$, and we use the shorthand notation $L^2(\Omega) := L^2(\Omega, \mathcal{F}, P)$. For every $q\geq 1$, the symbol $C_{q}$ stands for the $q$-th {\it Wiener chaos} of $W$, defined as the closed linear subspace of $L^2(\Omega)$
generated by the family $\{H_{q}(W(h)) : h\in  \HH,\left\| h\right\| _{ \HH}=1\}$, where $H_{q}$ is the $q$-th Hermite polynomial.
We write by convention $C_{0} = \R$. It is well-known that $L^2(\Omega)$ can be decomposed into the infinite orthogonal sum of the spaces $C_{q}$: this means that any square-integrable random variable
$F\in L^2(\Omega)$ admits the {\it Wiener-It\^{o} chaotic expansion} $F=\sum_{q=0}^{\infty }J_q (F)$,  
where the series converges in $L^2(\Omega)$, $J_0 (F)=\E[F]$, and the projections $J_q(F)$ are uniquely determined by $F$. 

In what follows, for the sake of simplicity, we assume that all the random elements belong to a finite sum of Wiener chaoses. Next, we recall the Gamma operators of Malliavin calculus. 
We let $D$ and $L$ stand for the Malliavin derivative and the Ornstein-Uhlenbeck generator, respectively.  We also let $L^{-1}$ denote the pseudoinverse of $L$.
 
\begin{mydef}\label{Def : Gamma}
	Let $F$ be a random variable that belongs to a finite sum of Wiener chaoses.  The sequence of random variables $\{\Gamma_r(F)\}_{r\geq 0}$ is recursively defined as follows. Set $\Gamma_0(F) = F$
		and, for every $r\geq 1$,
		\begin{align*} 
		\Gamma_{r}(F) = \langle DF,-DL^{-1}\Gamma_{r-1}(F)\rangle_{\HH}.
		\end{align*}
\end{mydef}



In the proof of the following proposition, we shall need the following formula (see Lemma 4.2 and Theorem 4.3 of \cite{np10}): Suppose $F$ belongs to a finite sum of Wiener chaoses, then, for $r\geq0$,
\begin{equation*}r!\E[\Gamma_r(F)]=\kappa_{r+1}(F),
\end{equation*}
where $\kappa_{r+1}(F)$ is the $(r+1)$-th cumulant of $F$. We shall also make repeated use of the following Malliavin integration by parts formula (see \cite[Theorem 2.9.1]{n-p-book}).  Let $F,G$ be random variables belonging to a finite sum of Wiener chaoses, and let $g$ have a bounded derivative.  Then
\begin{equation}\label{mibp}\E[Fg(G)]=\E[F]\E[g(G)]+\E\big[g'(G)\langle DG,-DL^{-1}F\rangle_{\HH}\big].
\end{equation}

\begin{prop}\label{prop:Gamma-Type-Expressions}
	Let $X\sim N(0,1)$. Let $Y$ be a centered
	random element belonging to a finite sum of Wiener chaoses. Then the following "Gamma characterisations" hold:
	\begin{itemize}
		\item[(a)] For $Y = H_3 (X)$ we have
		\begin{equation}\label{eq:Gamma3}
		\Gamma_5(Y) -153\Gamma_3(Y) -27 Y \Gamma_2 (Y) + 324 \Gamma_1 (Y) - 486 (4-Y^2) =0, \, a.s. 
		\end{equation}
		Conversely, if $\kappa_r(Y) = \kappa_r(H_3(X))$ for $r=2,3,4,5$, and in addition relation \eqref{eq:Gamma3} holds then $Y =_d H_3 (X)$. 
		\item[(b)] For $Y = H_3 (X)$ we have
		\begin{equation}\label{eq:Gamma3-1ess-cumulants}
		\Gamma_4(Y) +3 Y\Gamma_3(Y) - 540 \Gamma_2 (Y) - 351 Y \Gamma_1 (Y) + 81 Y (4-Y^2) =0, \, a.s. 
		\end{equation}
		Conversely, if $\kappa_r(Y) = \kappa_r(H_3(X))$ for $r=2,3,4$, and \eqref{eq:Gamma3-1ess-cumulants} holds then $Y =_d H_3 (X)$.
		\item[(c)] For $Y = H_4 (X)$ we have  
		\begin{equation}\label{eq:Gamma4}
		\Gamma_3 (Y) - 60 \Gamma_2(Y) + 16 (9-Y) \Gamma_1(Y) - 192 (Y+6)(3-Y) = 0,\, a.s.
		\end{equation}
		Conversely, if $\kappa_r(Y) = \kappa_r(H_4(X))$ for $r=2,3$, and \eqref{eq:Gamma4} holds then $Y=_d H_4 (X)$.
	\end{itemize}
\end{prop}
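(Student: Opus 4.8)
The plan is to reduce each equivalence to one of the Stein characterisations of Section \ref{sec3}, matching part (a) with the operator $\mathcal{S}_3$ (Proposition \ref{prophpchar}(iii)), part (b) with $\mathcal{S}_3'$ (Proposition \ref{prophpchar}(i)), and part (c) with $\mathcal{S}_4$ (Proposition \ref{prophpchar}(iv)). The bridge between the Malliavin--Gamma relations and the characteristic-function ODE (\ref{cfode}) of Lemma \ref{lemcf} is built from the integration-by-parts formula (\ref{mibp}) together with the cumulant identity $r!\E[\Gamma_r(F)]=\kappa_{r+1}(F)$. Note that in each case the highest Gamma index appearing equals the order $T$ of the matched Stein operator, and the prescribed cumulants run exactly up to this index $N$ (with $N=5,4,3$ in (a),(b),(c)); this consistency is the first sign that the bookkeeping will close.

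\emph{Forward direction.} For $Y=H_p(X)$ with $X$ a single standard Gaussian the isonormal process is one-dimensional, so on the Hermite basis $DH_n(X)=nH_{n-1}(X)\,e$ and $-DL^{-1}H_n(X)=H_{n-1}(X)\,e$ with $\|e\|_\HH=1$. Hence every $\Gamma_r(H_p(X))$ is an explicit polynomial in $X$, computed recursively from Definition \ref{Def : Gamma} by Hermite linearisation; for example $\Gamma_1(H_3(X))=3H_2(X)^2$ and $\Gamma_2(H_3(X))=9H_2(X)H_3(X)+36XH_2(X)$. I would compute $\Gamma_1,\dots,\Gamma_5$ this way, substitute into the left-hand sides of (\ref{eq:Gamma3})--(\ref{eq:Gamma4}), and verify that each collapses identically to the stated polynomial in $Y$ by matching Hermite coefficients. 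This is a finite algebraic check, and it is genuinely needed in this form: the individual $\Gamma_r(H_3(X))$ are not $\sigma(Y)$-measurable (as $H_3$ is not injective), so the content of each identity is precisely that the displayed combination reduces to a function of $Y$ alone, and a Fourier/conditioning argument would only yield $\E[\,\cdot\mid Y]=0$ rather than the almost sure statement.

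\emph{Converse direction} (the main step). Here $Y$ lies in a finite sum of chaoses, so it has moments of all orders and every $\Gamma_r(Y)\in\bigcap_p L^p$. Testing the almost sure relation against the bounded function $\mathrm{e}^{\mathrm{i}tY}$ gives $\E[\mathrm{e}^{\mathrm{i}tY}\Lambda(Y)]=0$ for all $t$, where $\Lambda(Y)$ denotes the relevant left-hand side. Writing $\psi_r(t):=\E[\mathrm{e}^{\mathrm{i}tY}\Gamma_r(Y)]$ and applying (\ref{mibp}) with $F=\Gamma_{r-1}(Y)$, $G=Y$ and $g(y)=\mathrm{e}^{\mathrm{i}ty}$ yields
\begin{equation*}\psi_r(t)=\frac{1}{\mathrm{i}t}\Big(\psi_{r-1}(t)-\frac{\kappa_r(Y)}{(r-1)!}\,\phi_Y(t)\Big),\qquad \psi_0(t)=\E[Y\mathrm{e}^{\mathrm{i}tY}]=-\mathrm{i}\,\phi_Y'(t),
\end{equation*}
so each $\psi_r$ is a combination of $\phi_Y$ and $\phi_Y'$ with coefficients rational in $t$ and affine in $\kappa_2(Y),\dots,\kappa_r(Y)$. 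The mixed terms are handled by $\E[Y\mathrm{e}^{\mathrm{i}tY}\Gamma_r(Y)]=-\mathrm{i}\,\psi_r'(t)$ and the polynomial-in-$Y$ terms by $\E[Y^j\mathrm{e}^{\mathrm{i}tY}]=(-\mathrm{i})^j\phi_Y^{(j)}(t)$; these are what promote the equation to one involving $\phi_Y''$ (and, in (b), $\phi_Y'''$), matching the coefficient degree $m$ of the operator. Substituting and clearing the powers of $t$ turns $\E[\mathrm{e}^{\mathrm{i}tY}\Lambda(Y)]=0$ into a homogeneous linear ODE with polynomial coefficients, and the cumulant hypotheses $\kappa_r(Y)=\kappa_r(H_p(X))$ make these coefficients coincide with those of the ODE (\ref{cfode}) attached to the matched Stein operator. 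Since $\phi_Y$ then solves that ODE and has the properties of a characteristic function, the sufficiency established in Proposition \ref{prophpchar} forces $\phi_Y=\phi_{H_p(X)}$, whence $Y=_d H_p(X)$.

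\emph{Main obstacle.} The crux is the bookkeeping just sketched: one must check that, after running the recursion and clearing denominators, the coefficient of each $\phi_Y^{(j)}(t)$ is an affine function of the prescribed cumulants that reproduces exactly the coefficient of (\ref{cfode}) for $\mathcal{S}_3$, $\mathcal{S}_3'$ or $\mathcal{S}_4$, and in particular that the assumed cumulants (those up to the top Gamma index) are precisely the ones needed to kill all residual terms. A cleaner way to organise this is to note that applying the same reduction to $Y=H_p(X)$, where $\Lambda\equiv0$ by the forward direction, recovers the defining ODE (\ref{cfode}); hence the two reductions agree once the cumulants agree. Secondary points are the regularity of $\phi_Y$ (automatic since $Y$ lies in a finite chaos) and the behaviour at $t=0$, both of which are controlled once the equation is put into the polynomial-coefficient form of Lemma \ref{lemcf}.
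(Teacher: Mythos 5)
Your proposal is correct, and its converse direction is in substance the same as the paper's: both bridge the Gamma relations to the Stein identity through the Malliavin integration-by-parts formula (\ref{mibp}) and the cumulant identity $r!\,\E[\Gamma_r(F)]=\kappa_{r+1}(F)$, and both conclude by invoking the sufficiency parts of Proposition \ref{prophpchar}. You merely specialise to $f(y)=\mathrm{e}^{\mathrm{i}ty}$ and organise the reduction as a recursion for $\psi_r(t)=\E[\mathrm{e}^{\mathrm{i}tY}\Gamma_r(Y)]$, which is legitimate since the sufficiency proofs only ever use exponential test functions via Lemma \ref{lemcf}, whereas the paper runs the integration-by-parts chain backwards for general $f$ to recover $\E[\mathcal{S}_3f(Y)]=0$ itself. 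The genuine divergence is in the forward direction. The paper starts from the Stein identity, pushes all terms onto the top derivative of $f$ by repeated integration by parts --- the one nontrivial step being the identity $\langle DY,-DL^{-1}Y^2\rangle_{\HH}=\Gamma_2(Y)+Y\Gamma_1(Y)$, obtained from a carr\'e du champ computation --- and arrives at $\E[A(Y)\mid Y]=0$; the upgrade to the almost sure statement is then delegated to the algebraicity of the Stein operator established in \cite{agg19}. You instead verify the almost sure identity by direct computation of $\Gamma_1,\dots,\Gamma_5$ of $H_3(X)$ as explicit Hermite polynomials in $X$. Your route is more self-contained, and your observation that conditioning alone cannot deliver the a.s.\ statement (the individual $\Gamma_r(H_3(X))$ are not $\sigma(Y)$-measurable) identifies precisely the point the paper handles by citation; the price is a heavier, though finite, algebraic check. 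In both directions the coefficient bookkeeping --- that the prescribed cumulants are exactly those needed to make the reduced equation coincide with the ODE (\ref{cfode}) attached to $\mathcal{S}_3$, $\mathcal{S}_3'$ or $\mathcal{S}_4$ --- is left as a finite verification, in your write-up much as in the paper's.
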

\begin{proof}
We prove item (a); the others are similar.	First assume that $Y= H_3 (X)$. Proposition \ref{prophpchar}, item (iii) implies that 
	\begin{align}
	\E \big[   486 ( 4- Y^2) f ^{(5)} (Y) - 486 Y f^{(4)} (Y) -27 (8-Y^2) f^{(3)}(Y) &\nonumber \\
\label{eq:H3-Stein-Identity}	+ 99 Y f''(Y)+ 6 f'(Y) - Y f(Y)  \big]=0.&
	\end{align}
 Next, we use several times the Malliavin integration by parts formula (\ref{mibp}) to write down all the expressions in \eqref{eq:H3-Stein-Identity} of expectations involving the lower derivatives of $f$ in terms of expressions having only fifth derivative of $f$. At this point, relying on a standard regularisation argument, by convoluting $f$ by an approximation of the identity, we can assume that the first five derivatives of $f$ are bounded. Now, since $\E[Y]=0$, we have that $ \E[Y f(Y)] = \E[ f'(Y) \Gamma_1 (Y)]$. 	We also have $ \E\left[ f' (Y) \left( 6 - \Gamma_1 (Y)\right)\right] = - \E \left[ f''(Y) \Gamma_2 (Y)\right]$, since $\E \left[ \Gamma_1 (Y)\right] = \E \left[ Y^2\right]=6$. Therefore, taking into account that $ \E \left[99 Y - \Gamma_2 (Y) \right] = - \frac{1}{2} \kappa_3 (Y)=0$, we obtain 
 $$\E \big[ f''(Y) \left( 99 Y - \Gamma_2 (Y) \right)\big] = \E \big[ f^{(3)}(Y)   \left( 99 \Gamma_1 (Y) - \Gamma_3 (Y)\right)\big].$$ 
 Hence, using Malliavin integration by parts once more we arrive at
	\begin{align}
	&\E \big[  f^{(3)} (Y) \left( -27 (8 - Y^2) + 99 \Gamma_1(Y) - \Gamma_3 (Y) \right) \big]\nonumber\\
\label{eq:non-linear-structure}	&= \E \big[ f^{(4)}(Y) \left(  99\Gamma_2 (Y) - \Gamma_4 (Y) + 27 \langle DY, - DL^{-1} Y^2 \rangle  \right)  \big],
	\end{align}
	where we used that $\E [   27 (8 - Y^2) - 99 \Gamma_1(Y) + \Gamma_3 (Y)  ]= \frac{1}{3!}\kappa_4 (Y) - 126 \kappa_2 (Y) = 0$. Due to presence of non-linearity in the expression $\langle DY, - DL^{-1} Y^2 \rangle_{\HH} $, at this point, one cannot directly apply Malliavin integration by parts to proceed. To overcome this little issue, we note that for any random variable $F$ in the $p$-th Wiener chaos it holds that $ L^{-1}(F^2) =  L^{-1} \Gamma_1 (F) - \frac{1}{2p} \left( F^2 - \E[F^2] \right)$. To see this, we note the pseudoinverse property $L^{-1} L F^2 = F^2 - \E[F^2]$. On the other hand, by definition of the carr\'e du champ operator $2\Gamma[F,G]=L(FG) - F LG - G LF$, we can write 
\begin{align*}
2\Gamma[F,F] = LF^2 - 2 F L F = LF^2 + 2p F^2.  
\end{align*}
Applying $L^{-1}$ to both sides, and using the fact that on the Wiener space $\Gamma[F,G]= \langle DF,DG \rangle_{\HH}$, we obtain 
\begin{align*}
2 L^{-1}  \Gamma[F,F] = 2p L^{-1} \Gamma_1 (F) = F^2 - \E[F^2] + 2p L^{-1} F^2,
\end{align*}
as required. Therefore
	\[ \langle DY, - DL^{-1} Y^2 \rangle_{\HH} = \Gamma_2 (Y) + Y \Gamma_1 (Y).  \] Plugging into \eqref{eq:non-linear-structure}, we obtain that 
	\begin{align*}
	\E & \big[  f^{(4)} (Y)\left(  -486 Y - \Gamma_4(Y) + 126\Gamma_2(Y) + 27 Y \Gamma_1 (Y) \right) \big] \\
	 &= \E \big[  f^{(4)} (Y)\left(    - \Gamma_4(Y) + 126\Gamma_2(Y)  \right) \big] +  27 \E \big[  f^{(4)} (Y) Y \left( -18 + \Gamma_1(Y) \right) \big]\\
	&= \E \big[  f^{(4)} (Y)\left(    - \Gamma_4(Y) + 126\Gamma_2(Y)  \right) \big] + 
	27 \E \big[  f^{(4)} (Y) Y \left( -6 + \Gamma_1(Y) \right) \big]\\
	&\quad - 27\cdot12 \E \big[ f^{(4)} (Y) Y \big]\\
	&:= I_1 + 27 I_2 - 12\cdot27I_3.
	\end{align*}
	Next, we have $ I_1 = \E [   f^{(5)} (Y)    \left(    - \Gamma_5(Y) + 126\Gamma_3(Y)  \right)  ]$ (since $\kappa_3(Y)=\kappa_5(Y)=0$) and $I_3 = \E [  f^{(5)} (Y) \Gamma_1 (Y) ]$. Moreover,
	\begin{align*}
	I_2 = \E\big[ \langle  D (Y  f^{(4)} (Y)) , - DL^{-1} \Gamma_1(Y) \big]
	&= \E \big[  f^{(5)} (Y) Y \Gamma_2 (Y) \big] + \E \big[  f^{(4)} (Y) \Gamma_2 (Y) \big]\\
	&=  \E \big[  f^{(5)} (Y) Y \Gamma_2 (Y) \big] + \E \big[  f^{(5)} (Y) \Gamma_3 (Y) \big].
	\end{align*}
	Gathering all the terms, we obtain,  for $f:\R \to \R$, that
	\[ \E \Bigg[  f (Y)  \Bigg(   \underbrace{\Gamma_5(Y) -153\Gamma_3(Y) -27 Y \Gamma_2 (Y) + 324 \Gamma_1 (Y) - 486 (4-Y^2)}_{:=A(Y)}  \Bigg)    \Bigg] =0. \] The latter implies that $\E\left[ A(Y)   \,\vert\, Y \right]=0$ and that completes the proof because the associated polynomial Stein operator is algebraic. For the other direction, having $\kappa_r(Y)=\kappa_r(H_3(X))$ for $r=2,3,4,5$, and proceeding backwards in the above steps we arrive to 
	\begin{align}
	\E\big[486(4-Y^2)f^{(5)}(Y)-486Yf^{(4)}(Y)-27(8-Y^2)f^{(3)}(Y)&\nonumber\\
+99Yf''(Y)+6f'(Y)-Yf(Y)\big]=0&.\nonumber
\end{align}
Now, we use Proposition \ref{prophpchar}, item (iii). 
\end{proof}

\begin{rem}\label{sec4rem}
\begin{itemize} \item[(i)] The Gamma expressions \eqref{eq:Gamma3}, \eqref{eq:Gamma3-1ess-cumulants} and \eqref{eq:Gamma4} differ in two major features compared with their counterparts when the target random variable $Y$ belongs to the second Wiener chaos. For a typical element in the second Wiener chaos with a finite spectral decomposition, a linear combination of Gamma operators with constant coefficients coincide with a polynomial in the target of degree one, see \cite{azmooden}. In contrast, here some of the coefficients in the Gamma expression are linear polynomials in the target, and the resulting  combination of  the Gamma operators  coincide with a polynomial in the target of degree at least two.
		
		\item[(ii)] Unlike the situation with targets in the first two Wiener chaoses, it is not straightforward (at least to us) how one can translate the LHS of the relations \eqref{eq:Gamma3}, \eqref{eq:Gamma3-1ess-cumulants} and \eqref{eq:Gamma4} in terms of cumulants/moments of the target.   The main obstacle is that the Gamma operators are not stable over Wiener chaoses of order higher than two. 
		
\item[(iii)] In principle the approach used to prove Proposition \ref{prop:Gamma-Type-Expressions} can be used together with the Stein operators of \cite{agg19} for $H_p(X)$, $p\geq5$,
to obtain analogous ``Gamma characterisations" for $H_p(X)$, $p\geq5$. 
Due to the complexity of these Stein operators, this would be quite an involved undertaking, though.
\end{itemize}
\end{rem}

\rg{
\begin{rem}
In this remark, we illustrate a possibility of how our findings can be applied in some concrete probabilistic approximations. In fact, one of our major motivations towards this study comes from the rich and classical asymptotic theory of $U$-statistics \cite{Kor_Ustatistics,Lee_Ustatistics}.    It is well-known that typical degenerate $U$-statistics are asymptotically non-normal. Although, the asymptotic theory of degenerate $U$-statistics are well-understood, very little is known about the rate of convergences. More precisely,   let $(X_n)$ be a sequence of i.i.d.\ random variables with $\E[X_1]=0$, $\E[X^2_1]=1$. Consider the following $U$-statistics having kernel $\psi (x_1,\dots,x_p) =\prod_{i=1}^{p} x_i$, $p\geq1$,
\begin{equation*}
U_n : =   {n \choose p} ^{-1}  \sum_{1 \le i_1 < \dots < i_p \le n} \psi (X_{i_1},\dots,X_{i_p}),  \quad  n\ge 1,
\end{equation*}	
and hence the order of degeneracy is $p-1$. Then, it holds that (see \cite[Chapter 3]{Lee_Ustatistics}):
\begin{equation}\label{eq:LimitUstatistic}
F_n:   =  n^{p/2} U_n   \stackrel{\text{law}}{\longrightarrow}  H_p(X),  \quad  \text{ where } \, X \sim N(0,1).
\end{equation}	
To the best of our knowledge, apart from the particular case $p=2$ (see, e.g., \cite{aaps19a}), there are no results in the existing literature on the quantification of probabilistic limit theorems \eqref{eq:LimitUstatistic} due to the obvious lack of characterising Stein operators for target distributions of the form $H_p(X)$. In order to highlight how the materials in Section \ref{sec4} can be utilised, hereafter, assume in addition that the sequence $(X_n)$ are i.i.d.\ $N(0,1)$ random variables (to locate ourself in a Gaussian setting). Hence, by embedding into an isonormal Gaussian process $\mathbb{X}$ on a suitable Hilbert space $\mathfrak{H}$ having an orthonormal basis $\{h_n\}$, we can write $X_n= I_1 (h_n)$, where $I_1$ is the first Wiener-It\^{o} integral with respect to $\mathbb{X}$.  Let $p=4$. Then, as $n\rightarrow\infty$,
\begin{align*} 
F_n &= \frac{24 n^2}{n(n-1)(n-2)(n-3)}  \sum_{i \le i_1 < i_2<i_3<i_4 \le n} X_{i_1} X_{i_2} X_{i_3} X_{i_4} \\
& =  \frac{24 n^2}{n(n-1)(n-2)(n-3)}  \sum_{i \le i_1 < i_2<i_3<i_4 \le n}   \Big(   I_4 \left(  h_{i_1} \tilde{\otimes} h_{i_2}  \tilde{\otimes} h_{i_3}  \tilde{\otimes} h_{i_4} \right)      \\
&\quad+4 I_2  \left(      \left(    h_{i_1}\tilde{\otimes} h_{i_2}   \right) \tilde{\otimes}_1    \left(     h_{i_3}\tilde{\otimes} h_{i_4}    \right)   \right)   \Big) \\
&  \, \stackrel{\text{law}}{\longrightarrow}  H_4(X).
\end{align*}
One has to note that $F_n \in C_2 \oplus C_4$, $n\in \N$ (the direct sum of the second and fourth Wiener chaoses associated to $\mathbb{X}$).  Denote by $L$ the characterising Stein operator for the target distribution $H_4(X)$ appearing in equation (\ref{h4x}). Let $\mathcal{H}$ be a suitable separating class of test functions; see \cite[Appendix C]{n-p-book}. Consider the associated Stein equation  $Lf (x)    = h (x)  - \E[h (H_4(X))]$ for a given test function $h \in \mathcal{H}$ (note that this is a third order non-homogeneous ODE).  For a moment, assume that, the  ODE admits a unique three times differentiable solution $f=f_h$ such that 
\begin{equation}\label{eq:SteinUniversality}
\sup_{h\in \mathcal{H}}  \Vert  f^{(k)} \Vert_\infty <C, \quad  k=0,1,2,3,
\end{equation}
for some constant $C>0$. Bounds \eqref{eq:SteinUniversality} above are known as Stein universality phenomenon in the Stein's method literatures and need to be verified for each characterising Stein operator corresponding to a given target distribution (for example,  when the target distribution is standard Gaussian $X \sim N(0,1)$, Stein universal bounds are well-understood, see \cite{Daly}). Then, by using the Malliavin integration by parts formula \eqref{mibp}, one can obtain 
\begin{align*}
d_{\mathcal{H}}   & \left(   F_n , H_4 (X)\right)    =\sup_{h \in \mathcal{H}}  \big \vert   \E[h (F_n)]    - \E[h(H_4(X))] 
  \big \vert  =  \sup_{h \in \mathcal{H}}  \big \vert       Lf (F_n )   \big \vert   \\
  & \le C  \Bigg\{              \big \vert  \kappa_2 (F_n)  - \kappa_2 (H_4(X))    \big \vert          + \big \vert  \kappa_3 (F_n)  - \kappa_3(H_4(X))    \big \vert        \\
  &  \qquad    +   \sqrt{    \E  \Big[    	\Gamma_3 (F_n) - 60 \Gamma_2(F_n) + 16 (9-F_n) \Gamma_1(F_n) - 192 (F_n+6)(3-F_n)     \Big]^2 }         \Bigg\}.
\end{align*}	
Therefore, a bound on the rate of convergence can be computed (although with cumbersome calculations) by analysing the last summand involving Malliavin $\Gamma$ operators up to order three. It remains to obtain suitable bounds for the quantities $\|f^{(k)}\|$, $k=0,1,2,3$, which we leave as an interesting and non-trivial open problem.

 \end{rem}	
}

\appendix

\section{Stein operators for univariate Gaussian Hermite polynomials}\label{appendixa}

Let $X\sim N(0,1)$. All Stein operators in this appendix were obtained by \cite{agg19}. The Stein operators (\ref{h3x}) and (\ref{h4x}) were also earlier obtained by \cite{gaunt-34}.
We do not reproduce the Stein operators of \cite{agg19} for $H_7(X)$ and $H_8(X)$; we refer the reader to Appendix B of arXiv version no.\ 1 of \cite{agg19} for their complicated formulas.


\vspace{3mm}


\noindent{$H_3(X)$ :}
\begin{align} \label{h3x-new}
5\, y -(3\,y^2+12)\partial   +207\,y\partial^{2}
   +(351\,y^2-1080)\partial^{3}
   +(81\,y^3-324\,y)\partial^{4}
\end{align}
\begin{align}\label{h3x}
   y -6\partial
   -99\,y\partial^{2}
   +(216-27\,y^2)\partial^{3}
   +486\,y\partial^{4}
   +(486\,y^2-1944)\partial^{5} 
\end{align}

\noindent{$H_4(X)$ : }
 \begin{align}\label{h4gen}
  ( -y^2+50\,y+24 )
 + (64\,y^2+72\,y-1008 )\partial
 + (16\,y^3-48\,y^2-576\,y+1728 )\partial^{2}
  \end{align} 
\begin{align} \label{h4x}
y -(24+44\,y) \partial  +(576+144\,y-16\,y^2)\partial^{2} +(192\,y^2+576\,y-3456)\partial^{3}
\end{align}

\noindent{$H_5(X)$ :} 
\begin{align}&y- 120\partial- 75325\,y\partial^2+ (- 81875\,y^2+7704000 )\partial^3+ (- 31250\,y^3+270600000\,y)\partial^4\nonumber\\
&+ (- 3125\,y^4 + 527800000\,y^2 - 39086400000)\partial^5+ (280000000\,y^3 - 155065000000\,y)\partial^6 \nonumber\\
&+ (35000000\,y^4 - 241335000000\,y^2 + 14306880000000)\partial^7\nonumber\\
&+ (- 198750000000\,y^3+53403600000000\,y )\partial^8\nonumber\\
&+ (- 33125000000\,y^4 + 34950000000000\,y^2 - 1170432000000000)\partial^9\nonumber\\
&+ (39000000000000\,y^3 - 10843200000000000\,y)\partial^{10}\nonumber\\
& + (9750000000000\,y^4 - 6696000000000000\,y^2 + 352512000000000000)\partial^{11}\nonumber\\
& + (- 2160000000000000\,y^3+622080000000000000\,y )\partial^{12}\nonumber\\
\label{h5x}&+(- 1080000000000000\,y^4 + 622080000000000000\,y^2 - 29859840000000000000)\partial^{13}
\end{align}

\noindent{$H_6(X)$ :}
\begin{align} &
y + (- 1278\,y - 720)\partial + (- 972\,y^2 + 103320\,y + 756000)\partial^2 \nonumber \\ &
+ (- 216\,y^3 + 228960\,y^2 + 16491600\,y - 120528000)\partial^3 \nonumber \\ &
+   (71280\,y^3 + 6771600\,y^2 - 307152000\,y - 3265920000)\partial^4  \nonumber \\ &
+   (- 314928000\,y^2 - 19945440000\,y + 125971200000)\partial^5 \nonumber \\ 
\label{h6x}&+   (- 209952000\,y^3 - 19945440000\,y^2 + 251942400000\,y + 7558272000000)\partial^6     
   \end{align} 



\subsection*{Acknowledgements}
RG was supported by a Dame Kathleen Ollerenshaw Research Fellowship. \rg{We would like to thank the reviewers for their helpful comments and suggestions.}

\end{document}